\newcommand{\labeltarget}[1]{\Hy@raisedlink{\hypertarget{#1}{}}}
\newcommand{\PreserveBackslash}[1]{\let\temp=\\#1\let\\=\temp}
\newcolumntype{C}[1]{>{\PreserveBackslash\centering}p{#1}}
\newtheorem{thm}{Theorem}[section]
\newtheorem{lem}[thm]{Lemma}
\newtheorem{prop}[thm]{Proposition}
\theoremstyle{definition}
\newcommand{\C}{\mathbb{C}}
\newcommand{\D}{\mathbb{D}}
\newcommand{\norm}[1]{\left\Vert#1\right\Vert}
\newcommand{\abs}[1]{\left\vert#1\right\vert}
\newcommand*\CLOSED[1]{\overline{#1}}
\newcommand{\PAIR}[1]{\langle#1\rangle}
\newcommand*{\HOLO}{\mathcal H}
\newcommand*\COP{\mathcal K}  
\newcommand*\BOP{\mathcal L}  
\DeclareMathOperator{\ARG}{arg}
\DeclareMathOperator{\CONV}{conv}
\newcommand*\BLOCH{\mathcal{B}}
\newcommand*\APPRE{\mathcal{AE}} 
\let\originalleft\left
\let\originalright\right
\renewcommand{\left}{\mathopen{}\mathclose\bgroup\originalleft}
\renewcommand{\right}{\aftergroup\egroup\originalright}
\begin{document}

\title[Essential norm and integration\ldots]{Essential norm and integration of a family of weighted composition operators}

\author[ ]{David Norrbo}

\email{d.norrbo@reading.ac.uk}
\address{Department of Mathematics and Statistics, School of Mathematical and Physical Sciences, University of Reading, Whiteknights, PO Box 220, Reading RG6 6AX, UK.}

\thanks{}

\keywords{Generalized Hilbert matrix operator, Hilbert matrix operator, Essential norm, Volterra operator, Weighted Bergman spaces, Weighted composition operators}

\subjclass{47B91, 47B38, 47G10, 30H20, 47A30} 

\begin{abstract}
We study the interchange of essential norm and integration of certain families of weighted composition operators acting on the standard weighted Bergman spaces \(A^p_\alpha\), where \(p>1\) and \(\alpha\geq 0\). To be more precise, we give a sufficient condition for \( \norm{\int u_tC_{\phi_t}\, dt}_e = \int \norm{ u_tC_{\phi_t}}_e \, dt  \) to hold in terms of geometric properties of \(u_t\) and \(\phi_t\). We also provide some necessary conditions for the equality to hold and calculate the essential norm of some integral operators such as some Volterra operators.
\end{abstract}

\maketitle

\section{Introduction}

For \(t\in]0,1[ \), let \(u_t\colon \D\to \C\) and \(\phi_t\colon \D\to \D\) be holomorphic functions on the unit disk \(\D\) in the complex plane  \(\C\). The main purpose of this article is to examine the mean of composition operators \(\int_0^1 S_t \, dt\) acting on the weighted Bergman spaces  \( A^p_{\alpha}\), where \(S_t : = u_t C_{\phi_t}\colon  A^p_\alpha \to A^p_\alpha \colon f \mapsto u_t (f\circ\phi_t) \). In particular, we are interested in when the phenomenon
\begin{equation}\label{eq:equalityOfEssentialNorm}
 \norm{\int_0^1  S_t   \, dt}_e     =  \int_0^1 \norm{S_t}_e  \, dt
\end{equation}
occurs. Such an equality could prove to be useful in determining the essential norm of integral operators. In the special case when \(S_t = T_1\) for \(t\) in an interval \(I\subsetneq ]0,1[\) and \(S_t = T_2\) for \(t\in ]0,1[\setminus I\), where \(T_1\) and \(T_2\) are bounded linear operators, we always have \(\leq\) in \eqref{eq:equalityOfEssentialNorm}. In this case, an equality would mean that there is no cancellation between the noncompact behavior of the operators. In the general case, it might happen that \(S_t\) is compact for all \(t\), but \(\int_0^1 S_t \, dt\) is not compact, in which case the equality in \eqref{eq:equalityOfEssentialNorm} would be \(>\); an example is provided right after Proposition \ref{prop:basicIneqEsentialNorm}.

In \cite[p.~29]{Lindstrom-2022}, \eqref{eq:equalityOfEssentialNorm} was proved to hold for some weighted composition operators, where the composition symbol was given by \(\phi_t(z) := \frac{t}{1-(1-t)z}, z\in \D \). One of the restricting factors in calculating the essential norm for weighted composition operators or the mean of a family of them, is the fact that the methods used to obtain a precise upper bound demand the composition operators to be univalent, see e.g. \cite[Proof of Theorem 4.2 and Lemma 8.2]{Lindstrom-2022}. Another way to approach the problem is to use estimates via Carleson measures, see for example \cite{Cuckovic-2007}. However, evaluating the estimates are not always possible. As a consequence, Carleson measure techniques are not suited to approach operators of the form \(\int_0^1  S_t   \, dt\), except in degenerate cases, for example, if \(S_t=S_{1/2}\) for all \(t\in]0,1[\), in which case the equality in \eqref{eq:equalityOfEssentialNorm}  is trivial.

Assume that \(\phi_t(\D)\) touches \(\partial \D\) at only one point, which is a fixed point for all \(t\). If in addition \(\{ \phi_t : t\in]0,1[\}\) and \(\{u_t : t\in]0,1[\}\) satisfy certain geometric \emph{easy-to-check} conditions (see Section \ref{sec:EssNormIntOp}), we prove that \eqref{eq:equalityOfEssentialNorm} holds.

Many integral operators have a representation, where the conditions are satisfied, see for example Section \ref{sec:examples}. We also show that if \(\phi_t(\D)\) touches \(\partial \D\) at two points, the situation is more complicated, and the equality above does not necessarily hold even for very simple symbols. In order to obtain these results, we use the fact that the essential norm can be represented as a supremum of limits of weakly null sequences, which is a nontrivial fact when \(p\neq 2\), and by the geometric conditions we are able to find a dominating sequence among all weakly null sequences. This method does not utilize upper bounds for the essential norm in the usual sense, and as a consequence, \(\phi_t\) is not required to be univalent.

Given a Banach space \(X\), the Banach algebra of bounded linear operators on \(X\) is denoted by \(\BOP(X)\) and the ideal of compact operators is denoted by \(\COP(X)\). The set of weakly null sequences consisting of elements of a set \(M\subset X\) is denoted by \(W_0(M)\). The closed unit ball of \(X\) is denoted by \(B_X\). The open disk in the complex plane with center \(z\) and radius \(r\) is denoted by \(B(z,r)\).
For the set of analytic functions on the unit disk \(\D\subset \C\) we write \(\HOLO(\D)\), and the normalized Lebesgue area measure is given by \(dA(x+iy)=dx\, dy/\pi\). Let \(\alpha\geq 0\) and \(p>1\). The weighted Bergman space \(A^p_\alpha\) is the reflexive Banach space defined as
\[
\Big\{ f\in \HOLO(\D) :  \norm{f}^p := \int_{\D} \abs{f(z)}^p  dA_\alpha(z)  <\infty \Big\},
\]
where \(dA_\alpha(z) := (1+\alpha) (1-\abs{z}^2)^{\alpha} \, dA(z)\). For more information, we refer the reader to \cite{Zhu-2007}.

Given a Banach space, the essential norm of an operator \(S\in\BOP(X)\) is defined as
\[
\norm{S}_e := \inf_{K\in\COP(X)} \norm{S-K}.
\]
For a given \(1<p<\infty\), a Banach space \(X\) is said to have property \((M_p)\) if \(  \COP(X\oplus_p X) \) is an M-ideal in \( \BOP(X\oplus_p X) \) \cite[p.~497]{Werner-1992}. For all sections except Section \ref{sec:FurtherResults}, we will consider the quantity
\begin{equation}\label{eq:essNormRepresentation}
\sup_{(f_n)\in W_0(\partial B_X)}  \limsup_n  \norm{S_t f_n},
\end{equation}
which coincides with the essential norm when \(X\) has property \((M_p)\), see for example \cite[p.~499]{Werner-1992}. Justification for \( A^p_{\alpha} \) having \((M_p)\) can be found in \cite[p.~163 and Corollary 3.6]{Kalton-1995}, see also \cite[Section 3]{Lindstrom-2022} for details.

The paper is organized as follows: In Section \ref{sec:EssNormIntOp}, we define the term \emph{admissible family}, which characterize the families \(\{ u_t C_{\phi_t} : t\in]0,1[ \}\) whose symbols \(u_t\) and \(\phi_t\) have suitable geometric properties. The main result is Theorem \ref{thm:essentialNormEquality}, where \eqref{eq:equalityOfEssentialNorm} is proved. Some additional interesting equalities are found in Theorem \ref{thm:DominatingAE}, and in Proposition \ref{prop:GoodProp} we prove that our test functions  \((f_{c,\xi})_c ,\ \xi\in\partial \D\)  serve as maximizers among all weakly null sequences with unit norm. In Section \ref{sec:NecessaryConditions}, we state some necessary conditions for \eqref{eq:equalityOfEssentialNorm} to hold. By a slight modification of the conditions characterizing the admissible families, we obtain a large class of families  \(\{ u_t C_{\phi_t} : t\in]0,1[ \}\)  for which  \eqref{eq:equalityOfEssentialNorm} fails. In Section \ref{sec:examples}, we use Theorems \ref{thm:essentialNormEquality} and \ref{thm:DominatingAE} to compute the essential norm for some Volterra type operators and generalized Hilbert matrix operators. We also provide a simple example where \(\phi_t\) is not univalent. Finally, in Section  \ref{sec:FurtherResults}, we give a slight improvement of an inequality in Kalton's work \cite[Theorem 2.4]{Kalton-1993}, by using the essential norm in the upper bound instead of the norm. This yields a new characterization for \(\COP(X)\) being an M-ideal in \(\BOP(X)\), when \(X\) is separable.

\section{Essential norm of integral operators}\label{sec:EssNormIntOp}

Let \(\{ \phi_t\colon \D\to \D : t\in]0,1[ \}\) be a family of self-maps of the disk and \(\{u_t\in \HOLO(\D) : t\in]0,1[\}\subset A^p_{\alpha}\). We say that the family \(\{u_t C_{\phi_t} : t\in]0,1[\}\) have the same direction if there exists a \(\xi\in\partial\D \), which is called the direction, such that for all \(t\in]0,1[\) it holds that

\begin{enumerate}[label=(\alph*)]
\item \(\phi_t\in \HOLO(  \D  )\) and \(\phi_t'\in C(\CLOSED{\D} \cap B(\xi,\epsilon) )\) for some \(\epsilon = \epsilon(t) > 0\),   \label{C:phiAnalyticAtCriticalPoint}
\item \(  \phi_t(\xi) = \xi  \),   \label{C:xiFixPointForPhi}
\item \( \CLOSED{  \phi_t(\D\setminus  B(\xi,\epsilon)  ) }\cap \partial \D = \emptyset\) for every \(\epsilon>0\),   \label{C:PhiDoesNotTouchBoundaryExceptAtXi}
\item For every \(\epsilon>0\) the set \(\CLOSED{\phi(B(\xi,\epsilon)\cap \D)}\) contains a horocycle touching \(\xi\),    \label{C:NeigborhoodContainsHorocylce}
\item \(u_t\in C(\CLOSED{\D}\cap B(\xi,\epsilon) )\) for some \(\epsilon= \epsilon(t)>0\).   \label{C:utContinuousAtCriticalPoint}
\end{enumerate}
By \ref{C:phiAnalyticAtCriticalPoint}, the Julia-Carathéodory theorem (see e.g. \cite[Theorem 2.44]{Cowen-1995}) and the formula on the bottom of page 48 in \cite{Cowen-1995}, we have \(\phi_t'(\xi)> 0\) implying \(\phi_t'\) is univalent in some neighborhood of \(\xi\). The conditions \ref{C:xiFixPointForPhi} and \ref{C:PhiDoesNotTouchBoundaryExceptAtXi} ensures \(\phi_t(\D)\) touches the boundary exactly at one point, \(\xi\), which is a fixed point. Conditions \ref{C:NeigborhoodContainsHorocylce} and \ref{C:utContinuousAtCriticalPoint} guarantees that for a suitable \(\xi\in\partial \D\), the family \((f_{c,\xi})\) is a dominating approximate evaluation map; these concepts are defined in the next paragraph. The meaning of dominating is the statement of Proposition \ref{prop:GoodProp}, which is also the main reason for condition \ref{C:phiAnalyticAtCriticalPoint}. The meaning of \(\phi_t'\in C(\CLOSED{\D} \cap B(\xi,\epsilon) )\)  is that \(\phi_t'\) can be continuously extended to \( \CLOSED{\D} \cap B(\xi,\epsilon)  \).

The sequence \( (f_{c_n,\xi}) \), where \(c_n<(2+\alpha)/p, \lim_n c_n = (2+\alpha)/p\),  \(f_{c_n,\xi} = \hat{f}_{c_n,\xi} / \norm{\smash{ \hat{f}_{c_n,\xi}}}\) and \(\hat{f}_{c_n,\xi} = (\xi-z)^{-c_n}\), will play a crucial role in this article. It is a special case of  \emph{approximate evaluation maps}, which are defined to be sequences \( (f_n)\subset \partial B_{A^p_{\alpha}} \) for which there exists a \(\xi\in\partial \D\) such that \(\lim_n \sup_{z\in\D\setminus B(\xi,\epsilon)} \abs{f_n(z)} = 0\) for every \(\epsilon>0\). For a given \(\xi\in\partial \D\), the family of such sequences is denoted by \(\APPRE(\xi)\) and \(\APPRE :=  \bigcup_{\xi\in\partial\D}\APPRE(\xi)\). Since \(W_0(\partial B_{A^p_\alpha})\) consists of sequences \((f_n)\subset \partial B_{A^p_\alpha} \) tending to zero uniformly on compact subsets of the unit disk \(\D \), \(\APPRE\subset W_0(\partial B_{A^p_{\alpha}})\). Concerning notations, we write \(f_{c,\xi}\) and \(\lim_c\) instead of  \(f_{c_n,\xi}\) and \(\lim_n\), when these functions are involved. We abbreviate  \(f_{c,1}\) by  \(f_c\).

We say a family of weighted composition operators  \(\{S_t : t\in ]0,1[\}\) is \hypertarget{admissible}{\emph{admissible}} if the members have the same direction, say \(\xi\in\partial\D\), and condition \eqref{eq:Wxi} holds, which states
\begin{equation}\labeltarget{eq:W}\tag{$W_{\xi}$}\label{eq:Wxi}
\begin{split}
 & \int_0^1 \norm{S_t}_{\BOP(A^p_\alpha)}  \, dt<\infty \quad \text{ and } \quad  \\
&\text{ for every }t_0\in]0,1[:\begin{cases} \lim_{t\to t_0}  \sup_{z\in\D} \abs{ \phi_t(z) -  \phi_{t_0}(z) }=  0, \\
\text{for every } \epsilon'>0  \
\lim_{t\to t_0}  \int_{\D\setminus B(\xi,\epsilon')} \abs{u_t(z) - u_{t_0}(z) }^p \, dA_{\alpha}(z) =  0,  \\
\text{for some } \epsilon'>0   \ \sup_{ \abs{t-t_0}<\epsilon' }  \sup_{z\in B(\xi,\epsilon')\cap \D} \abs{ u_t(z) }<\infty, \\
\text{for some } \epsilon'>0 \  \inf_{ \abs{t-t_0}<\epsilon' } \inf_{z\in  B(\xi,\epsilon')\cap \D} \abs{\phi_t'(z)} >0 .
\end{cases}
\end{split}
\end{equation}
The main purpose of \eqref{eq:Wxi} is to ensure  \(\int_0^1 S_t \, dt\) is a well-defined bounded operator on \(A^p_{\alpha}\), see Proposition \ref{prop:IntStWellDefined}.

An admissible family \(\{S_t : t\in ]0,1[\}\) satisfies the following: By the formula on the bottom of page 48 in \cite{Cowen-1995}, the function 
\[
z \mapsto   \frac{   1-\abs{z }^2  }{   1-\abs{\phi_t(z)}^2  } ,
\]
is bounded and \(\limsup_{z\to \xi}\) exists and coincides with \( \phi_t'(\xi)^{-1} \). Moreover, using the Julia-Carathéodory theorem, we have for any \(t\in]0,1[\) 
\begin{equation}\label{eq:nonTangentialLimitForQuotient}
z\mapsto \frac{   \xi-z  }{   \xi-\phi_t(z)  }\in H^\infty \quad  \text{ and the nontangential limit }  \quad \angle \lim_{z\to 1}   \frac{   \xi-z  }{   \xi-\phi_t(z)  }   = \phi_t(\xi)^{-1}.
\end{equation}
To see that
\[
z\mapsto \frac{   \xi-z  }{   \xi-\phi_t(z)  }\in H^\infty
\]
is true, let \(I_{z,\xi}\subset \CLOSED{\D}\) be the line from \(z\in \D\) to \(\xi\). For \(z\in\D\cap B(\xi,\epsilon)\), where \(\epsilon>0\) is smaller than the one given by condition \ref{C:phiAnalyticAtCriticalPoint} and such that \(\abs{\phi_t'(z) - \phi_t'(\xi)}< \phi_t'(\xi)/2\), we have
\[
\abs{   \xi - \phi_t(z) } = \abs{  \int_{ I_{z,\xi}  } \phi_t'(w) \, dw } \geq  \abs{\xi-z}\phi_t'(\xi)/2.
\]

Before we proceed, we remark that the admissible families could also contain weighted composition operators \(S_t\) with \( \CLOSED{ \phi_t(\D) }\cap \partial \D = \emptyset \). In this case, the operator is compact and will not affect the results given that \(\phi_t'\) is interpreted as the angular derivative, which in this case has infinite value. If the family \(\{S_t\colon t\in]0,1[\} \) contains such functions, for these \(t_0\), we can ignore all local (with respect to \(t\)) conditions except \(\lim_{t\to t_0}  \sup_{z\in\D} \abs{ \phi_t(z) -  \phi_{t_0}(z) }=  0\). 

We begin by proving \(\int_0^1 S_t \, dt\) is a well-defined bounded linear operator if   \(\{S_t : t\in ]0,1[\}\) is \hyperlink{admissible}{admissible}.

\begin{prop}\label{prop:IntStWellDefined}
Assume \(\{S_t : t\in ]0,1[\}\) is an \hyperlink{admissible}{admissible}
 family. Then \( \int_0^1 S_t \, dt \) is a bounded linear operator \(A^p_{\alpha}\to A^p_{\alpha}\) and
\[
 \norm{\int_0^1  S_t   \, dt}_{\BOP(A^p_{\alpha})}    \leq  \int_0^1 \norm{S_t}_{\BOP(A^p_{\alpha})}  \, dt.
\]
\end{prop}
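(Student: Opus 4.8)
The plan is to realise $\int_0^1 S_t\,dt$ pointwise, defining $Tf := \int_0^1 S_t f\,dt$ as a Bochner integral in $A^p_\alpha$ for each $f\in A^p_\alpha$, and then to obtain both the boundedness of $T$ and the asserted estimate at once from the standard Bochner inequality $\norm{\int_0^1 h(t)\,dt}\le \int_0^1\norm{h(t)}\,dt$. For this it suffices to verify, for each fixed $f$, that $t\mapsto S_t f$ is Bochner integrable as a map $]0,1[\to A^p_\alpha$; that is, that it is strongly measurable and that $t\mapsto\norm{S_t f}$ is integrable.

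The integrability is immediate from the first clause of \eqref{eq:Wxi}: since $\norm{S_t f}\le\norm{S_t}_{\BOP(A^p_\alpha)}\norm{f}$, one gets $\int_0^1\norm{S_t f}\,dt\le\norm{f}\int_0^1\norm{S_t}_{\BOP(A^p_\alpha)}\,dt<\infty$.

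For strong measurability I would invoke the Pettis measurability theorem, which, because $A^p_\alpha$ is separable, reduces the task to weak measurability. First I would show that for each fixed $z\in\D$ the scalar map $t\mapsto (S_t f)(z)=u_t(z)\,f(\phi_t(z))$ is continuous. Continuity of $t\mapsto\phi_t(z)$ is given by the first local condition in \eqref{eq:Wxi}, and since $f$ is holomorphic on $\D$ this yields continuity of $t\mapsto f(\phi_t(z))$. Continuity of $t\mapsto u_t(z)$ follows from the second local condition in \eqref{eq:Wxi}: any compact $K\subset\D$ lies in $\D\setminus B(\xi,\epsilon')$ for small $\epsilon'$, and $L^p(dA_\alpha)$-convergence of holomorphic functions on a region forces locally uniform convergence, so $u_t(z)\to u_{t_0}(z)$. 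Hence every point evaluation $t\mapsto\PAIR{S_t f,\delta_z}$, $\delta_z\colon g\mapsto g(z)$, is continuous, in particular measurable.

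To pass from these evaluations to weak measurability against an arbitrary functional, I would use that $\{\delta_z:z\in\D\}$ is total on $A^p_\alpha$, so the linear span of $\{\delta_z:z\in D_0\}$ for a countable dense $D_0\subset\D$ is weak-$*$ dense in $(A^p_\alpha)^*$. As $A^p_\alpha$ is separable, $B_{(A^p_\alpha)^*}$ is weak-$*$ metrizable, so every $g\in(A^p_\alpha)^*$ is a weak-$*$ limit of a sequence of finite combinations of such $\delta_z$; then $t\mapsto\PAIR{S_t f,g}$ is a pointwise limit of continuous functions, hence measurable. This gives weak measurability, and since the range is separable, strong measurability follows by Pettis. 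Consequently $t\mapsto S_t f$ is Bochner integrable, $Tf:=\int_0^1 S_t f\,dt$ is well defined and linear in $f$, and $\norm{Tf}\le\int_0^1\norm{S_t f}\,dt\le\norm{f}\int_0^1\norm{S_t}_{\BOP(A^p_\alpha)}\,dt$, which is exactly boundedness together with the claimed inequality. I expect the strong measurability step to be the only real obstacle; the norm bound and the (implicit) measurability of $t\mapsto\norm{S_t}_{\BOP(A^p_\alpha)}$, obtained as $\sup_k\norm{S_t f_k}$ over a countable dense set $\{f_k\}$ of the unit ball, are routine.
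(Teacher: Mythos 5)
Your argument is correct in substance but follows a genuinely different route from the paper's. The paper proves the stronger statement that \(t\mapsto S_t f\) is \emph{norm-continuous} on \(]0,1[\) for every \(f\in A^p_\alpha\), by splitting \(\norm{S_tf-S_{t_0}f}^p\) into three integrals (\(I_1\) over \(B(\xi,\epsilon')\cap\D\), handled with a change of variables that uses injectivity of \(\phi_t\) near \(\xi\); \(I_2\) and \(I_3\) over \(\D\setminus B(\xi,\epsilon')\)), which consumes all four local clauses of \eqref{eq:Wxi} together with the admissibility conditions; it then invokes the theory of integration of continuous vector-valued functions (Katznelson, pp.~295--298) to define \(\int_0^1 S_t\,dt\) and obtain the norm bound. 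You settle for \emph{strong measurability} of \(t\mapsto S_tf\) and use the Bochner integral instead, and for that you only need pointwise-in-\(z\) continuity in \(t\), which you correctly extract from the uniform convergence of \(\phi_t\) and from the standard fact that \(L^p(dA_\alpha)\)-convergence of holomorphic functions on \(\D\setminus B(\xi,\epsilon')\) forces locally uniform convergence. This is much lighter on estimates: you never need the change of variables near \(\xi\), the sup/inf clauses of \eqref{eq:Wxi}, or conditions \ref{C:phiAnalyticAtCriticalPoint}--\ref{C:utContinuousAtCriticalPoint}, so your proof in fact establishes the proposition under weaker hypotheses. What the paper's approach buys in exchange is the continuity of \(t\mapsto S_tf\) itself, which is the form of hypothesis its later arguments are phrased in (e.g.\ Proposition \ref{prop:basicIneqEsentialNorm}).

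One step of yours needs repair. You claim that weak-\(*\) density of \(\SPAN\{\delta_z : z\in D_0\}\) in \((A^p_\alpha)^*\), together with weak-\(*\) metrizability of \(B_{(A^p_\alpha)^*}\), implies that every \(g\in(A^p_\alpha)^*\) is the weak-\(*\) limit of a \emph{sequence} of finite combinations of the \(\delta_z\). That inference is invalid as a general principle: density produces approximating nets, and metrizability of the ball helps only if the approximants can be kept bounded, which need not be possible (there exist weak-\(*\) dense subspaces of \(\ell^1=c_0^*\) whose weak-\(*\) sequential closure is proper). In the present setting the conclusion is nevertheless true and easy to justify correctly: \(A^p_\alpha\) is reflexive, so the weak-\(*\) and weak topologies on the dual coincide, and by Mazur's theorem the weak closure of the convex set \(\SPAN\{\delta_z : z\in D_0\}\) equals its norm closure; hence the span of point evaluations is norm dense in \((A^p_\alpha)^*\), and measurability of \(t\mapsto\PAIR{S_tf,g}\) passes trivially to norm limits of functionals. (Alternatively: the set of \(g\) for which \(t\mapsto\PAIR{S_tf,g}\) is measurable is a weak-\(*\) sequentially closed subspace containing all \(\delta_z\); by the Krein--Smulian theorem and separability of \(A^p_\alpha\) it is weak-\(*\) closed, hence equals \((A^p_\alpha)^*\).) With this correction your proof is complete.
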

\begin{proof}
It is sufficient to assume \(\xi=1\). We will prove that for every \(f\in A^p_{\alpha}\) the map \( t\mapsto S_tf \) is continuous. An application of the theory stated on pages 295--298 in \cite{Katznelson-2004} would then yield the statements. To this end, let \(t_0,\epsilon\in]0,1[, \  f\in A^p_{\alpha}\) and choose \(\epsilon'\in ]0,  (1/2) \min\{t_0,1-t_0\}[\) small enough to grant \(E:=\sup_{\abs{t-t_0}<\epsilon'}  \sup_{z\in B(1,\epsilon')\cap\D} \abs{ u_t(z)  } < \infty\),  
\[
C:=\sup_{\abs{t-t_0}<\epsilon'}  \sup_{z\in B(1,\epsilon')\cap\D}  \abs{\phi_t'(z)  }^{-2} \Bigg(  \frac{   1-\abs{z }^2  }{   1-\abs{\phi_t(z)}^2  }   \Bigg)^{\alpha}  < \infty 
\]
 and \(\phi_t'|_{B(1,\epsilon')\cap\D}\) is injective for every \(t\) satisfying \(\abs{t-t_0}<\epsilon'\). Put
\[
I_1(\epsilon') :=  \int_{B(1,\epsilon')\cap\D} \abs{ u_t C_{\phi_t} f(z)  -  u_{t_0} C_{\phi_{t_0}} f(z) }^p \, dA_{\alpha}(z) ,
\]
\[
I_2(\epsilon') :=  \int_{\D \setminus B(1,\epsilon') } \abs{ (u_t(z)-u_{t_0}(z)) C_{\phi_t} f(z) }^p \, dA_{\alpha}(z)  
\]
and
\[
I_3(\epsilon') :=  \int_{\D \setminus B(1,\epsilon') } \abs{   u_{t_0}(z)  (C_{\phi_t} f(z)  - C_{\phi_{t_0}} f(z) ) }^p \, dA_{\alpha}(z). 
\]

For \( \abs{t-t_0} < \epsilon'  \), we have
\begin{align*}
I_1(\epsilon') & \lesssim_p   \sup_{\abs{t-t_0}<\epsilon'} \int_{B(1,\epsilon')\cap\D} \abs{ u_t(z)  f(\phi_t(z))  }^p \, dA_{\alpha}(z)  \leq   E^p  \sup_{\abs{t-t_0}<\epsilon'} \int_{B(1,\epsilon')\cap\D} \abs{   f(\phi_t(z))  }^p \, dA_{\alpha}(z) \\
& \leq  E^p C \sup_{\abs{t-t_0}<\epsilon'} \int_{\phi_t( B(1,\epsilon')\cap\D )} \abs{   f(z)  }^p  \, dA_{\alpha}(z).
\end{align*}
Now choose \(\epsilon''\in ]0,\epsilon'[  \) small enough to obtain  
\(
I_1(\epsilon'')  < \epsilon.
\) 

Next, we consider \(I_2\). Pick \(\epsilon'''\in ]0,\epsilon''[ \) small enough to ensure  
\[ 
\sup_{z\in\D\setminus B(\xi,\epsilon'')} \abs{ \phi_t(z) -  \phi_{t_0}(z) } < \frac{   1 -  \sup_{z\in\D\setminus B(1,\epsilon'')}  \abs{ \phi_{t_0}(z) }   }{2},
\]
whenever \(\abs{t-t_0}< \epsilon''' \), which yields
\begin{equation}\label{eq:phitUniformlyBounded}
\sup_{z\in\D\setminus B(\xi,\epsilon'')} \abs{ \phi_t(z) } < \frac{   1 + \sup_{z\in\D\setminus B(1,\epsilon'')}  \abs{ \phi_{t_0}(z) }   }{2} =: z_0 < 1 .
\end{equation}
For \(0<\delta<\epsilon'''\) small enough, \cite[Theorem 4.14]{Zhu-2007} and \hyperlink{eq:W}{$(W_1)$} yield that \(\abs{t-t_0}< \delta \) implies
\begin{align*}
I_2(\epsilon'') &  = \int_{\D \setminus B(1,\epsilon'') } \abs{ (u_t(z)-u_{t_0}(z)) C_{\phi_t} f(z) }^p \, dA_{\alpha}(z)  \\
 &  \leq  \frac{  \norm{f}_{A^p_\alpha}^p   }{ (1-\abs{z_0})^{2+\alpha}  } \int_{\D \setminus B(1,\epsilon'') } \abs{ u_t(z)-u_{t_0}(z)}^p  \, dA_{\alpha}(z)  <\epsilon.
\end{align*}
Finally, consider \(I_3\). From the definition of \(E\) and the choice of \(\delta\), it follows that \(\sup_{\abs{t-t_0}<\delta} \norm{u_t}_{A^p_{\alpha}} <\infty\). Due to \eqref{eq:phitUniformlyBounded}, \(f\) is continuous on \(M(\epsilon'',\delta):=\CLOSED{ \bigcup_{\abs{t-t_0}<\delta}   \phi_t(\D \setminus B(1,\epsilon''))} \). We can therefore choose \(\delta'\in]0,\delta[\) such that
\begin{align*}
 &\sup_{\abs{t-t_0}<\delta'} \norm{u_t}_{A^p_{\alpha}}^p  \sup_{ z\in \D \setminus B(1,\epsilon'') }  \abs{   (C_{\phi_t} f(z)  - C_{\phi_{t_0}} f(z) ) }^p \\
&\leq \sup_{\abs{t-t_0}<\delta'} \norm{u_t}_{A^p_{\alpha}}^p  \Big(  \sup_{ w\in M(\epsilon'',\delta) }  \abs{  f'(w) } \sup_{ z\in \D \setminus B(1,\epsilon'') } \abs{ \phi_t(z) - \phi_{t_0} (z) }   \Big)^p  < \epsilon.
\end{align*}
We have now obtained that \(\abs{t-t_0}<\delta'\) implies 
\(
I_3(\epsilon'') <\epsilon\), and hence,
\[
\sup_{\abs{t-t_0}<\delta'} \norm{S_t f - S_{t_0} f  }^p  \leq \sup_{\abs{t-t_0}<\delta'} (I_1(\epsilon'') + I_2(\epsilon'') +I_3(\epsilon''))  <3 \epsilon.
\]

\end{proof}

Concerning the essential norm, we have the following proposition.
\begin{prop}\label{prop:basicIneqEsentialNorm}
Let \(X\) be a Banach space and assume \(\{T_t:t\in]0,1[\}\subset\BOP(X) \) is a family of operators such that  \(\int_0^1 \norm{T_t} \, dt<\infty\). If \(]0,1[\to \BOP(X):t\mapsto T_t\) is continuous, or \(t\mapsto T_tf\) is continuous for every \(f\in X\) and \(X\) has property \((M_p)\), then
\begin{equation}\label{eq:BasicEssentialNormIneq}
 \norm{\int_0^1  T_t   \, dt}_{e,\BOP(X)}    \leq  \int_0^1 \norm{T_t}_{e,\BOP(X)}  \, dt.
\end{equation}
\end{prop}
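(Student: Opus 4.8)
The plan is to treat the two hypotheses separately, since they call for genuinely different tools. The clean case is norm-continuity. Here I would exploit that the essential norm is exactly the quotient norm in the Calkin algebra: writing $\pi\colon \BOP(X)\to \BOP(X)/\COP(X)$ for the canonical quotient map, one has $\norm{S}_{e,\BOP(X)}=\norm{\pi(S)}$. Since $t\mapsto T_t$ is norm-continuous and $\int_0^1\norm{T_t}\,dt<\infty$, the map is Bochner integrable in $\BOP(X)$, and because $\pi$ is a bounded linear operator it commutes with the Bochner integral. Thus
\begin{align*}
\norm{\int_0^1 T_t\,dt}_{e,\BOP(X)}
&=\norm{\pi\!\left(\int_0^1 T_t\,dt\right)}
=\norm{\int_0^1 \pi(T_t)\,dt}\\
&\le \int_0^1 \norm{\pi(T_t)}\,dt
=\int_0^1\norm{T_t}_{e,\BOP(X)}\,dt,
\end{align*}
using that the norm of a Bochner integral is at most the integral of the norm. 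This settles \eqref{eq:BasicEssentialNormIneq} in the norm-continuous case, and notably does not require property $(M_p)$.

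The second hypothesis is more delicate because strong continuity of $t\mapsto T_t$ does not make this map Bochner integrable in $\BOP(X)$; the operator $S:=\int_0^1 T_t\,dt$ is defined only pointwise by $Sf=\int_0^1 T_t f\,dt$, the latter being a Bochner integral in $X$ for each fixed $f$ (it converges since $\int_0^1\norm{T_t f}\,dt\le\norm{f}\int_0^1\norm{T_t}\,dt<\infty$, so in particular $\norm{S}\le\int_0^1\norm{T_t}\,dt$). To reach the essential norm I would invoke property $(M_p)$ through the representation \eqref{eq:essNormRepresentation}. Fix a weakly null sequence $(f_n)\in W_0(\partial B_X)$. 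For each $n$,
\[
\norm{S f_n}=\norm{\int_0^1 T_t f_n\,dt}\le \int_0^1 \norm{T_t f_n}\,dt.
\]
The functions $t\mapsto\norm{T_t f_n}$ are continuous, hence measurable, and dominated by the integrable function $t\mapsto\norm{T_t}$ (as $\norm{f_n}=1$), so the reverse Fatou lemma gives
\[
\limsup_n \norm{S f_n}\le \limsup_n\int_0^1\norm{T_t f_n}\,dt\le \int_0^1 \limsup_n\norm{T_t f_n}\,dt.
\]
Applying \eqref{eq:essNormRepresentation} pointwise in $t$ to the same sequence $(f_n)$ yields $\limsup_n\norm{T_t f_n}\le\norm{T_t}_{e,\BOP(X)}$, so the right-hand integrand is bounded by $t\mapsto\norm{T_t}_{e,\BOP(X)}$. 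Taking the supremum over all $(f_n)\in W_0(\partial B_X)$ and using \eqref{eq:essNormRepresentation} once more, now for $S$, produces \eqref{eq:BasicEssentialNormIneq}.

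I expect the main obstacle to lie entirely in the strongly continuous case. The danger is to assume the interchange of $\limsup_n$ and $\int_0^1$, which is precisely where the hypothesis $\int_0^1\norm{T_t}\,dt<\infty$ earns its keep: it supplies the integrable dominant needed for the \emph{reverse} Fatou lemma, and the inequality would point the wrong way if one carelessly invoked the ordinary Fatou lemma for $\liminf$. A secondary point worth checking is measurability: one must confirm that $t\mapsto\limsup_n\norm{T_t f_n}$ is measurable, which holds since it is a countable $\limsup$ of continuous functions, and that $t\mapsto\norm{T_t}$ is measurable, which is implicit in the standing assumption that its integral is finite. Finally, one should verify that the pointwise bound $\limsup_n\norm{T_t f_n}\le\norm{T_t}_{e,\BOP(X)}$ is legitimate, i.e. that $(f_n)$ is an admissible competitor in \eqref{eq:essNormRepresentation} for every fixed $t$ — which it is, being a single weakly null sequence on $\partial B_X$ chosen independently of $t$.
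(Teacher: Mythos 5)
Your proposal is correct in both cases, but the two halves relate to the paper differently. For the strongly continuous case with property $(M_p)$, your argument — Minkowski's inequality, the reverse Fatou lemma with the integrable dominant $t\mapsto\norm{T_t}$, and the pointwise bound $\limsup_n\norm{T_tf_n}\le\norm{T_t}_{e,\BOP(X)}$ — is precisely the fleshed-out version of what the paper compresses into a single sentence (``the inequality follows from formula \eqref{eq:essNormRepresentation}''). One remark: the pointwise bound needs no $(M_p)$ at all, since compact operators map weakly null sequences to norm-null sequences in any Banach space; the property is only used at the final step, to get $\norm{\int_0^1 T_t\,dt}_{e}\le\sup_{(f_n)}\limsup_n\norm{\int_0^1 T_tf_n\,dt}$.

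In the norm-continuous case you take a genuinely different route. The paper restricts to $[\epsilon,1-\epsilon]$, approximates the integral by Riemann sums $\frac{1}{n-1}\sum_k T_{k/n}$, chooses for each rational $q$ a compact operator $K_q$ with $\norm{T_q-K_q}<2^{-\pi(q)}+\norm{T_q}_{e,\BOP(X)}$ (the enumeration $\pi$ makes the accumulated error at most $1$, hence negligible after dividing by $n-1$), uses that $t\mapsto\norm{T_t}_{e,\BOP(X)}$ is continuous (being $1$-Lipschitz in the operator norm) to identify the limit of the averaged essential norms with $\int_\epsilon^{1-\epsilon}\norm{T_t}_{e,\BOP(X)}\,dt$, and finally lets $\epsilon\to0$ using $\int_0^1\norm{T_t}\,dt<\infty$. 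Your Calkin-algebra argument reaches the same inequality in three lines: norm continuity plus the integrable norm bound give Bochner integrability of $t\mapsto T_t$ in $\BOP(X)$, the quotient map $\pi$ commutes with the Bochner integral, and the norm of a Bochner integral is dominated by the integral of the norm. What the paper's argument buys is self-containedness: it uses only Riemann sums and finite sums of compact operators, consistent with the elementary (Katznelson-style) integration theory it invokes to define $\int_0^1 T_t\,dt$, with no vector-valued measure theory. What yours buys is brevity and conceptual clarity, at the cost of invoking Bochner integrability (legitimate here: continuity gives strong measurability and essentially separable range) and of one unstated but standard identification, namely that the Bochner integral in $\BOP(X)$ coincides with the operator defined pointwise by $f\mapsto\int_0^1T_tf\,dt$; this follows by applying the bounded evaluation map $T\mapsto Tf$ under the integral. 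Neither route uses $(M_p)$ in this case, as you correctly note.
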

\begin{proof}
As in the proof of Proposition \ref{prop:IntStWellDefined}, the assumptions yield \(\int_0^1 T_t\, dt\) is a well-defined operator \cite[pp.~295--298]{Katznelson-2004}. Moreover, if \(t\mapsto T_t\) is continuous, so is \(t\mapsto \norm{T_t}_{e,\BOP(X)}\).

Let  \(\epsilon\in]0,1/2[\) and \(\pi\) be a bijection \(]\epsilon,1-\epsilon[\cap \mathbb Q \to \mathbb N := \{1,2,\ldots\}\). For \(q\in ]0,1[\cap \mathbb Q\) let \(K_q \colon  X \to Y\) be a compact operator satisfying 
\[
\norm{T_{q} - K_q}_{X\to Y} < \frac{1}{2^{\pi(q)}}  +   \norm{T_{q}}_{e,\BOP(X)} . 
\]
It follows that
\begin{equation}\label{eq:leftHandSum}
\begin{split}
\norm{  \frac{1}{n-1}\sum_{\substack{ k=1\\  k/n\in[\epsilon,1-\epsilon] } }^{n-1}   T_{k/n} -  \frac{1}{n-1}\sum_{\substack{ k=1\\  k/n\in[\epsilon,1-\epsilon] } }^{n-1}   K_{k/n}  } &\leq   \frac{1}{n-1}\sum_{\substack{ k=1\\  k/n\in[\epsilon,1-\epsilon] } }^{n-1}  \norm{  T_{k/n} - K_{k/n}  } \\
& \leq    \frac{1}{n-1} +  \frac{1}{n-1}\sum_{\substack{ k=1\\  k/n\in[\epsilon,1-\epsilon] } }^{n-1}  \norm{  T_{k/n} }_{e,\BOP(X)}. 
\end{split}
\end{equation}
Since \( t\mapsto \norm{  T_t }_{e,\BOP(X)}\) is continuous, the right-hand side tends to
\[
\int_0^1 \norm{  T_t }_{e,\BOP(X)} \, dt,
\]
as \(n\to \infty\). The left-hand side of \eqref{eq:leftHandSum} is bigger than
\[
\norm{  \frac{1}{n-1}\sum_{ \smash{   \substack{ k=1\\  k/n\in[\epsilon,1-\epsilon] } }  }^{n-1}   T_{k/n}  }_{e,\BOP(X)} ,
\]
hence, we can conclude
\[
\norm{  \int_\epsilon^{1-\epsilon} T_t \, dt  }_{e,\BOP(X)}   \leq  \int_\epsilon^{1-\epsilon} \norm{  T_t }_{e,\BOP(X)} \, dt.
\]
Letting \(\epsilon\to 0\) yields the statement.

In the case of \(X\) having property \((M_p)\), the inequality follows from formula \eqref{eq:essNormRepresentation}.

\end{proof}

Notice that there are compact operators \(K_t\) such that \(\int_0^1 K_t \, dt\) is non compact. In this instance we have
\[
\norm{ \int_0^1  K_t \, dt }_e > 0 = \int_0^1 \norm{K_t}_e \, dt.
\] 
An example is the standard integral representation of the Hilbert matrix on Bergman spaces, in which case \(S_t \colon f\mapsto  [z\mapsto f(t)/(1-tz)] \). The fact that \( \norm{S_t}\not\in L^1(]0,1[)\) can be proved directly using \(f(z) =  \Big( (1-z)^\frac{2}{p} \ln \frac{e}{1-z} \Big)^{-1} \), or indirectly, because if \( \norm{S_t}\in L^1(]0,1[)\), dominated convergence and \eqref{eq:essNormRepresentation} would give a contradiction; see \cite[Example 7.4]{Lindstrom-2022} or Section \ref{sec:examples} with \(\lambda=1\) for the value of \(\norm{ \int_0^1  S_t \, dt }_{e,\BOP(A^p_0)}\).

For the first statement in the lemma below and the remark following it, we refer to \cite[Section 4]{Thesis-2023}. The second statement in the lemma follows easily from the definition.

\begin{lem}\label{lem:BasicApproxEvalLemma}
Let \(g\in H^\infty\) and \(\xi\in\partial\D\). If the nontangential limit \(\angle \lim_{z\to \xi} \abs{g(z)}\) exists, then
\[
\lim_c  \norm{f_{c,\xi} g}_{A^p_{\alpha}} = \angle \lim_{z\to \xi} \abs{g(z)}.
\]
If \(g\in C(\CLOSED{\D}\cap B(\xi,\epsilon))\) for some \(\epsilon>0\), then for every \((f_n)\in \APPRE(\xi)\)
\[
\lim_n  \norm{f_n g}_{A^p_{\alpha}} =  \abs{g(\xi)}.
\]
\end{lem}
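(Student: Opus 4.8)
The plan is to reduce to $\xi=1$ using the rotation invariance of $dA_\alpha$ and of the norm, and then to handle the two statements separately: the second is a short concentration argument, while the first requires a finer analysis of where the mass of the probability measure $\abs{f_{c}}^p\,dA_\alpha$ accumulates. For the second statement, set $M:=\norm{g}_{H^\infty}$ and note that $g$ is uniformly continuous on $\CLOSED{\D}\cap\CLOSED{B(1,\delta)}$ for $\delta<\epsilon$. Given $\eta>0$, first choose $\delta$ so small that $\bigl|\,\abs{g(z)}^p-\abs{g(1)}^p\,\bigr|<\eta$ for $z\in\CLOSED{\D}\cap B(1,\delta)$. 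Splitting $\norm{f_ng}^p=\int_\D\abs{f_n}^p\abs{g}^p\,dA_\alpha$ over $\D\cap B(1,\delta)$ and its complement, the complement is dominated by $M^p\bigl(\sup_{\D\setminus B(1,\delta)}\abs{f_n}\bigr)^p\to0$ (since $dA_\alpha$ is a probability measure and $(f_n)\in\APPRE(1)$), while on $\D\cap B(1,\delta)$ replacing $\abs{g}^p$ by $\abs{g(1)}^p$ costs at most $\eta\int_\D\abs{f_n}^p\,dA_\alpha=\eta$; as $\int_{\D\cap B(1,\delta)}\abs{f_n}^p\,dA_\alpha=1-o(1)$, this gives $\limsup_n\bigl|\norm{f_ng}^p-\abs{g(1)}^p\bigr|\le\eta$, and $\eta\to0$ yields $\lim_n\norm{f_ng}=\abs{g(1)}$.

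For the first statement the normalization cancels, so it suffices to prove $\int_\D\abs{g}^p\,d\nu_c\to L^p$, where $L:=\angle\lim_{z\to1}\abs{g(z)}$ and $d\nu_c:=\abs{f_{c}}^p\,dA_\alpha=\norm{\hat{f}_{c}}^{-p}\abs{1-z}^{-cp}\,dA_\alpha$ is a probability measure on $\D$. First I would substitute $w=1-z=\rho e^{i\psi}$; there $\abs{1-z}=\rho$ and $1-\abs{z}^2=2\Re w-\abs{w}^2=2\rho\cos\psi\,(1+o(1))$ as $\rho\to0$, so the density of $\nu_c$ with respect to $dA$ is proportional to $\abs{1-z}^{-cp}(1-\abs{z}^2)^\alpha=2^\alpha\rho^{\,\alpha-cp}(\cos\psi)^\alpha(1+o(1))$, with $dA=\rho\,d\rho\,d\psi/\pi$. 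Writing $s:=2+\alpha-cp\to0^+$, the radial factor is $\rho^{s-1}$, so $\int_0^{\rho_0}\rho^{s-1}\,d\rho=\rho_0^{\,s}/s\sim1/s$ diverges; hence $\nu_c$ concentrates at $z=1$ in the sense that $\nu_c(\{\rho\ge\rho_1\})\le\rho_1^{-cp}\norm{\hat{f}_{c}}^{-p}\to0$ for each fixed $\rho_1>0$. The decisive structural fact is that the limiting angular density is proportional to $(\cos\psi)^\alpha$ on $]-\tfrac{\pi}{2},\tfrac{\pi}{2}[$, which assigns zero mass to the tangential directions $\psi=\pm\tfrac{\pi}{2}$. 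Since a Stolz angle at $1$ is exactly a sector $\abs{\psi}\le\beta$ with $\beta<\tfrac{\pi}{2}$, one may, given $\eta>0$, fix $\beta<\tfrac{\pi}{2}$ with $\int_{\beta<\abs{\psi}<\pi/2}(\cos\psi)^\alpha\,d\psi<\eta\int_{\abs{\psi}<\pi/2}(\cos\psi)^\alpha\,d\psi$, whence $\nu_c(\{\abs{\psi}>\beta\})<2\eta$ for $c$ near $(2+\alpha)/p$.

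Finally I would split $\int_\D\abs{g}^p\,d\nu_c$ into the near-tangential part $\{\abs{\psi}>\beta\}$, where $\abs{g}^p\le M^p$ and the $\nu_c$-mass is $<2\eta$, and the Stolz sector $\{\abs{\psi}\le\beta\}$. On the latter the nontangential limit gives $\abs{g(z)}^p\to L^p$ \emph{uniformly} as $\rho\to0$, so $\bigl|\abs{g}^p-L^p\bigr|<\eta$ for $\rho<\rho_1$, while the remainder $\{\abs{\psi}\le\beta,\ \rho\ge\rho_1\}$ has vanishing $\nu_c$-mass by concentration. Combining the three contributions with $\nu_c(\D)=1$ gives $\limsup_c\bigl|\int_\D\abs{g}^p\,d\nu_c-L^p\bigr|\lesssim\eta(M^p+L^p+1)$, and letting $\eta\to0$ finishes the proof. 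The main obstacle is the measure-concentration analysis of the middle paragraph: one must rigorously justify the local approximation $1-\abs{z}^2\approx2\Re(1-z)$ together with the negligibility of the region away from $z=1$, so as to identify both the concentration at $z=1$ and the limiting angular weight $(\cos\psi)^\alpha$, which is precisely what forces the limit to depend only on the nontangential limit of $\abs{g}$.
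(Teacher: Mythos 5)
Your proposal is correct, but there is nothing in the paper to compare it against step by step: the paper does not prove this lemma at all. For the first statement it refers the reader to Section~4 of the author's thesis (the reference \cite{Thesis-2023}), and the second statement is declared to follow "easily from the definition". Your argument for the second statement (split at $\partial B(\xi,\delta)$, use that $(f_n)\in\APPRE(\xi)$ is uniformly small off $B(\xi,\delta)$ and that $\norm{f_n}=1$ with $dA_\alpha$ a probability measure, then use continuity of $g$ at $\xi$) is precisely the intended easy argument. For the first statement you supply a genuinely self-contained route: viewing $\abs{f_c}^p\,dA_\alpha$ as a probability measure $\nu_c$, passing to polar coordinates $1-z=\rho e^{i\psi}$, showing that the radial mass diverges like $1/s$ with $s=2+\alpha-cp\to0^+$ (hence $\nu_c$ concentrates at $\xi$) while the limiting angular distribution has density proportional to $(\cos\psi)^\alpha$, and then exploiting that truncated sectors $\{\abs{\psi}\le\beta\}$, $\beta<\pi/2$, lie in Stolz angles while the tangential tail carries arbitrarily little $\nu_c$-mass and $\abs{g}\le M$ there. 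This is exactly why the conclusion depends only on the nontangential limit, and it is a valid proof; what it buys is independence from the external reference, at the cost of the polar-coordinate asymptotics.

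Two points should be tightened, though neither is a genuine gap. First, "the limiting angular density $(\cos\psi)^\alpha$ assigns zero mass to the tangential directions" is literally meaningful only for $\alpha>0$; for $\alpha=0$ the density is constant, and what your argument actually uses — that $\int_{\beta<\abs{\psi}<\pi/2}(\cos\psi)^\alpha\,d\psi$ can be made arbitrarily small by taking $\beta$ close to $\pi/2$ — is true for every $\alpha\ge0$, so the wording should be adjusted to match the use. Second, the approximation $1-\abs{z}^2=2\rho\cos\psi\,(1+o(1))$ is not uniform as $\abs{\psi}\to\pi/2$, and the radial integration range is $0<\rho<2\cos\psi$, not a fixed interval; the clean fix is the exact identity $1-\abs{z}^2=\rho(2\cos\psi-\rho)$, which gives
\begin{equation*}
\int_0^{2\cos\psi}\rho^{s-1}(2\cos\psi-\rho)^\alpha\,d\rho=(2\cos\psi)^{s+\alpha}\,B(s,\alpha+1),\qquad B(s,\alpha+1)\sim 1/s,
\end{equation*}
from which both the $1/s$ divergence, the concentration at $\xi$, and the limiting angular weight follow rigorously and with no $o(1)$ bookkeeping.
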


An interesting approximate evaluation map, which will not be used here, is \( h_{n,\xi} = \hat{h}_{n,\xi} / \smash{ \norm{\smash{ \hat{h}_{n,\xi}}}_{A^p_{\alpha}} }\), where \(\hat{h}_{n,\xi} =((\xi+z)/2)^n\). If \(g\) is a bounded function on \(\D\) for which the tangential limits exists, denoted \(t-\lim_{z\to \xi} \abs{g(z)}\) (the nontangential limits does not need to exist), then 
\[
\lim_n  \norm{h_{n,\xi} g}_{A^p_{\alpha}} = t-\lim_{z\to \xi} \abs{g(z)}.
\]
The approximate evaluation \((h_{n,\xi})\) is, therefore, in some way the opposite of \((f_{c,\xi})\).

Concerning the lemma below, notice that if \(\phi_t\) is continuous on \(\CLOSED{\D}\), then \(\phi_t^{-1}(\eta)\) is a finite set for any point \(\eta\in\partial\D\). 
\begin{lem} \label{lem:AEofIntNormSt}
Assume \(\{S_t:t\in]0,1[\}\) has direction \(\xi\in\partial\D\) except for condition \ref{C:xiFixPointForPhi} being replaced by \(\CLOSED{\phi_t(\D)}\cap \partial \D = \{\xi\}\) and \(\phi_t^{-1}(\xi)\) is finite, and for the rest of the conditions, \(B(\xi,\epsilon)\) is replaced by 
\(
\bigcup_{\zeta\in\phi_t^{-1}(\xi)} B(\zeta,\epsilon)
\). For \(t\in]0,1[\), we have
\[
 \lim_c \norm{S_t f_{c,\xi}}_{A^p_{\alpha}}^p \leq \sup_{ (f_n)\in \APPRE}\limsup_n \norm{S_t f_n}_{A^p_{\alpha}}^p  \leq     \sum_{\zeta\in \phi_t^{-1}(\xi)}  \frac{ \abs{u_t(\zeta)}^p }{ \abs{\phi_{t}'(\zeta)}^{2+\alpha}    },
\]
where the inequalities are equalities if \(\alpha=0\).

In particular, if \(\int_0^1\norm{S_t}_{\BOP(A^p_{\alpha})} \, dt < \infty\), then
\[
\lim_c  \int_0^1\norm{S_t f_c}_{A^p_{\alpha}}^p \, dt  \leq \sup_{ (f_n)\in \APPRE}\limsup_n \int_0^1\norm{S_t f_n}_{A^p_{\alpha}}^p \, dt \leq    \int_0^1 \sum_{\zeta\in \phi_t^{-1}(\xi)}  \frac{ \abs{u_t(\zeta)}^p }{ \abs{\phi_{t}'(\zeta)}^{2+\alpha}    } \, dt,
\]
where the inequalities are equalities if \(\alpha=0\).
\end{lem}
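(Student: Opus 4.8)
The plan is to establish the pointwise chain first and then obtain the integrated statement by a limit--integral interchange; throughout I assume, as in Proposition \ref{prop:IntStWellDefined}, that $\xi=1$. The leftmost inequality is immediate: since $c\to(2+\alpha)/p$ forces $(f_{c,\xi})\in\APPRE(\xi)\subset\APPRE$, the number $\limsup_c\norm{S_tf_{c,\xi}}^p$ is one of the terms over which the middle supremum is taken. It therefore remains to bound the middle term by the sum on the right for an arbitrary $(f_n)\in\APPRE$ of direction $\eta\in\partial\D$. If $\eta\neq\xi$, then by the replaced condition \ref{C:xiFixPointForPhi} we have $\CLOSED{\phi_t(\D)}\cap\partial\D=\{\xi\}$, so $\eta\notin\CLOSED{\phi_t(\D)}$ and $\phi_t(\D)$ avoids some ball $B(\eta,\delta)$; hence $\sup_{\phi_t(\D)}\abs{f_n}\to0$, and since $u_t\in A^p_\alpha$ we get $\norm{S_tf_n}^p\leq\big(\sup_{\D\setminus B(\eta,\delta)}\abs{f_n}^p\big)\norm{u_t}^p\to0$, which is $\leq$ the right-hand side. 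So the whole difficulty is the case $\eta=\xi$.

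For $\eta=\xi$, write $\phi_t^{-1}(\xi)=\{\zeta_1,\dots,\zeta_m\}$ and fix $\epsilon>0$ small enough that the balls $B(\zeta_j,\epsilon)$ are disjoint, $\phi_t$ is univalent on each with $\inf\abs{\phi_t'}>0$ (possible by \ref{C:phiAnalyticAtCriticalPoint} and the Julia--Carathéodory normalisation $\phi_t'(\zeta_j)>0$), and $u_t,\phi_t'$ are continuous up to the boundary there. I split $\norm{S_tf_n}^p$ into the integrals over $B_j:=B(\zeta_j,\epsilon)\cap\D$ and over $\D\setminus\bigcup_jB_j$. On the latter, the replaced condition \ref{C:PhiDoesNotTouchBoundaryExceptAtXi} gives $\CLOSED{\phi_t(\D\setminus\bigcup_jB_j)}\cap\partial\D=\emptyset$, so $\phi_t$ stays in some $\{\abs{w}\leq r\}$, $f_n\circ\phi_t\to0$ uniformly and that piece vanishes. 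On each $B_j$ I change variables $w=\phi_t(z)$, whose Jacobian turns $dA_\alpha(z)$ into $\abs{\phi_t'(\psi_j(w))}^{-2}\big(\tfrac{1-\abs{\psi_j(w)}^2}{1-\abs{w}^2}\big)^{\alpha}dA_\alpha(w)$ with $\psi_j$ the local inverse, so the $B_j$-piece equals $\int_{\phi_t(B_j)}\abs{f_n(w)}^p g_j(w)\,dA_\alpha(w)$, where $g_j$ is the product of the three factors $\abs{u_t(\psi_j(w))}^p$, $\abs{\phi_t'(\psi_j(w))}^{-2}$ and the $\alpha$-power. The main obstacle is that this last factor admits only a \emph{limsup}: by the displayed consequence of the Julia--Carathéodory theorem, $\limsup_{z\to\zeta_j}\tfrac{1-\abs{z}^2}{1-\abs{\phi_t(z)}^2}=\phi_t'(\zeta_j)^{-1}$, whence $\limsup_{w\to\xi}g_j(w)=\abs{u_t(\zeta_j)}^p/\abs{\phi_t'(\zeta_j)}^{2+\alpha}$. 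Thus for every $\delta>0$ there is a neighbourhood of $\xi$ on which $g_j\leq\abs{u_t(\zeta_j)}^p/\abs{\phi_t'(\zeta_j)}^{2+\alpha}+\delta$, while $g_j$ is bounded on all of $\phi_t(B_j)$. Splitting the $w$-integral at that neighbourhood and using $\int_\D\abs{f_n}^p\,dA_\alpha=1$ together with the uniform decay of $f_n$ off any ball about $\xi$, I get $\limsup_n(B_j\text{-piece})\leq\abs{u_t(\zeta_j)}^p/\abs{\phi_t'(\zeta_j)}^{2+\alpha}+\delta$; letting $\delta\to0$ and summing over $j$ yields the middle $\leq$ right inequality.

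When $\alpha=0$ the offending factor is identically $1$, so each $g_j$ extends \emph{continuously} to $\xi$ with value $\abs{u_t(\zeta_j)}^p/\abs{\phi_t'(\zeta_j)}^{2}$, which both upgrades the limsup to a genuine limit and opens the matching lower bound. Testing with $(f_{c,\xi})$ and discarding the nonnegative complementary piece, I bound $\norm{S_tf_{c,\xi}}^p$ below by $\sum_j\int_{\phi_t(B_j)}\abs{f_{c,\xi}}^pg_j\,dA_0$. By the replaced condition \ref{C:NeigborhoodContainsHorocylce}, each $\CLOSED{\phi_t(B_j)}$ contains a horocycle at $\xi$; since the $A^p_0$-mass of $f_{c,\xi}$ concentrates inside every horocycle at $\xi$ as $c\to 2/p$ (a property of these test functions, cf.\ Lemma \ref{lem:BasicApproxEvalLemma} and \cite[Section 4]{Thesis-2023}), one has $\int_{\phi_t(B_j)\cap B(\xi,\cdot)}\abs{f_{c,\xi}}^p\,dA_0\to1$. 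Combined with $g_j\to g_j(\xi)$ this gives $\liminf_c(B_j\text{-piece})\geq\abs{u_t(\zeta_j)}^p/\abs{\phi_t'(\zeta_j)}^{2}$, and summing yields $\liminf_c\norm{S_tf_{c,\xi}}^p\geq\sum_j\abs{u_t(\zeta_j)}^p/\abs{\phi_t'(\zeta_j)}^{2}$; with the upper bound already proved, all three terms coincide.

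For the ``in particular'' statement I integrate in $t$. The leftmost inequality is again trivial since $(f_c)=(f_{c,1})\in\APPRE$. For the middle $\leq$ right inequality I invoke reverse Fatou: as $\norm{S_tf_n}^p\leq\norm{S_t}^p_{\BOP(A^p_\alpha)}$, once this dominating function is integrable one obtains $\limsup_n\int_0^1\norm{S_tf_n}^p\,dt\leq\int_0^1\limsup_n\norm{S_tf_n}^p\,dt$, and the pointwise bound finishes it. When $\alpha=0$ the same domination lets dominated convergence interchange $\lim_c$ with $\int_0^1$, so that $\lim_c\int_0^1\norm{S_tf_c}^p\,dt=\int_0^1\lim_c\norm{S_tf_c}^p\,dt=\int_0^1\sum_{\zeta}\abs{u_t(\zeta)}^p/\abs{\phi_t'(\zeta)}^2\,dt$, whence the extreme terms agree and the chain collapses to equalities. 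The delicate point is the integrability of $t\mapsto\norm{S_t}^p$: it follows once $\norm{S_t}$ is locally bounded on $]0,1[$ (which, under the continuity of $t\mapsto S_tf$ available for admissible families, holds by the uniform boundedness principle, giving $\norm{S_t}^p\leq M_\epsilon^{p-1}\norm{S_t}$ on $[\epsilon,1-\epsilon]$), so one runs the argument on $[\epsilon,1-\epsilon]$ and lets $\epsilon\to0$ with monotone convergence on the right, exactly as in Proposition \ref{prop:basicIneqEsentialNorm}.
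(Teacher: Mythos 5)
Your treatment of the pointwise chain is correct and is essentially the paper's own proof: decompose \(\norm{S_tf_n}^p\) over the finitely many preimages \(\zeta_j\in\phi_t^{-1}(\xi)\), kill the complementary piece (and the whole norm when \(\eta\neq\xi\)) by the uniform decay of \(f_n\) against \(\norm{u_t}_{A^p_\alpha}\), change variables on each \(B_j\), use continuity of \(u_t\) and \(\phi_t'\) up to the boundary together with the Julia--Carath\'eodory \(\limsup\) for the ratio \((1-\abs{z}^2)/(1-\abs{\phi_t(z)}^2)\), and at \(\alpha=0\) obtain the matching lower bound from the fact that \(\phi_t(B_j)\) contains a horodisk at \(\xi\) in which the mass of \(f_{c,\xi}\) concentrates (the paper states this in one sentence; your \(\delta\)-splitting makes the same estimate explicit). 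One cosmetic slip: for \(\zeta_j\neq\xi\), Julia--Carath\'eodory does not normalise \(\phi_t'(\zeta_j)\) to be positive, only nonzero with positive unimodular-twisted angular derivative — but nonvanishing is all your local-univalence step actually uses.

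The genuine gap is in your final paragraph. The paper settles the integrated statements in one line by dominated convergence, with majorant \(t\mapsto\norm{S_t}^p_{\BOP(A^p_\alpha)}\). You rightly sense that \(\int_0^1\norm{S_t}\,dt<\infty\) does not by itself make this \(p\)-th-power majorant integrable when \(p>1\), but your repair fails on two counts. First, continuity of \(t\mapsto S_tf\) is not a hypothesis of this lemma — it is a consequence of admissibility, i.e.\ of \eqref{eq:Wxi}, which the lemma does not assume — so the uniform boundedness step has no basis. Second, even granting \(\sup_{t\in[\epsilon,1-\epsilon]}\norm{S_t}<\infty\), running the argument on \([\epsilon,1-\epsilon]\) and letting \(\epsilon\to0\) does not reach the full interval: monotone convergence handles the right-hand side, but \(\limsup_n\int_0^1\norm{S_tf_n}^p\,dt\) is not controlled by \(\lim_{\epsilon\to0}\limsup_n\int_\epsilon^{1-\epsilon}\norm{S_tf_n}^p\,dt\) unless the tails \(\int_{]0,\epsilon[\cup]1-\epsilon,1[}\norm{S_tf_n}^p\,dt\) are small uniformly in \(n\) — and uniform tail control is precisely the integrable domination you were trying to circumvent; the same objection defeats your truncated dominated-convergence step for the \(\alpha=0\) equalities. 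The workable options are either to follow the paper and take the hypothesis as furnishing the integrable majorant for the \(p\)-th powers (which is what ``directly from dominated convergence'' presupposes, and which holds in all of the paper's applications, where \(\norm{S_t}\) is explicitly estimated), or to state the integrated claims under \(\int_0^1\norm{S_t}^p\,dt<\infty\); your detour derives neither from the lemma's stated assumptions.
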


\begin{proof}
It is sufficient to consider \(\xi = 1\). Let \(t\in]0,1[\), \((f_n)\in \APPRE(1) \) and choose \(\epsilon>0\) small enough, to grant that for \(\zeta\in \phi_t^{-1}(1)\), \(u_t|_{B_{\zeta}}\) is continuous, \(\phi_t|_{B_{\zeta}}\) is invertible and has a continuous derivative, where \(B_{\zeta} := B(\zeta,\epsilon)\cap\CLOSED{\D}\). Now \((f_n)\in \APPRE(1) \) yields
\begin{align*}
&\limsup_n \norm{S_t f_n}_{A^p_{\alpha}}^p  =   \limsup_n    \sum_{\zeta\in \phi_t^{-1}(1)}  \int_{B_{\zeta}}  \abs{f_n(\phi_t(z))}^p  \abs{ u_t(z)}^p   \, dA_\alpha(z)  \\
&=   \sum_{\zeta\in \phi_t^{-1}(1)} \limsup_n   \int_{ \phi_t(B_{\zeta})}  \abs{f_n(z)}^p  \abs{ u_t\big(  {\phi_t}|_{B_{\zeta}}  \hspace{-4pt}^{-1}(z)  \big)}^p   \abs{  ({\phi_t}|_{B_{\zeta}} \hspace{-4pt}^{-1})'(z)  }^2   \Bigg( \frac{ 1- \abs{ \phi_t|_{B_{\zeta}} \hspace{-4pt}^{-1}(z) }^2  }{1-\abs{z}^2} \Bigg)^{\alpha} \, dA_\alpha(z) \\
&\leq  \sum_{\zeta\in \phi_t^{-1}(1)}     \abs{ u_t\big(  {\phi_t}|_{B_{\zeta}}  \hspace{-4pt}^{-1}(1)  \big)}^p   \abs{  ({\phi_t}|_{B_{\zeta}} \hspace{-4pt}^{-1})'(1)  }^{2+\alpha} =  \sum_{\zeta\in \phi_t^{-1}(1)}  \frac{ \abs{u_t(\zeta)}^p }{ \abs{\phi_{t}'(\zeta)}^{2+\alpha}    }. 
\end{align*}
Since \( \phi_t(B_{\zeta}) \) contains a disk tangent to the unit circle at \(1\), the inequality is an equality if \((f_n) = (f_{c_n})\) and \(\alpha = 0 \).

Finally, take \((f_n)\in \APPRE(\eta) \) for some \(\eta\in\partial\D\setminus\{1\}\). Since  \(\CLOSED{\phi_t(\D)}\cap \partial \D\subset \{1\}\) there is an \(\epsilon'>0\) such that for \(n\) large enough \(f_n\) is bounded on \(\D\setminus B(\eta,\epsilon')\supset \phi_t(\D)\), and 

\[
 \int_{\D}  \abs{f_n(\phi_t(z))}^p  \abs{ u_t(z)}^p   \, dA_\alpha(z) \leq   \sup_{z\in\D\setminus B(\eta,\epsilon')} \abs{f_n(z)}   \norm{u_t}^p_{A^p_{\alpha}}  \stackrel{n\to \infty}{\longrightarrow} 0.
\]

The statements concerning the integrals follow directly from dominated convergence.
\end{proof}

\begin{lem}\label{lem:AEofIntSt}

If \(\{S_t:t\in]0,1[\}\) is an \hyperlink{admissible}{admissible} family with direction \(\xi\), then
\[
 \lim_c \norm{  \int_0^1 S_t f_{c,\xi} \, dt}_{A^p_{\alpha}}  =   \abs{ \int_0^1  \frac{      u_t(\xi)     }{   \phi_t'(\xi)^{(2+\alpha)/p}  } \, dt  }.
\]

\end{lem}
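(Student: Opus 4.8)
The plan is to reduce to \(\xi=1\), set \(c_0:=(2+\alpha)/p\), and prove that \(\lim_c\norm{\int_0^1 S_t f_c\,dt}\) equals \(\abs{L}\), where \(L:=\int_0^1 u_t(1)\phi_t'(1)^{-c_0}\,dt\). The starting point is an exact factorization. Writing \(\psi_t(z):=\frac{1-z}{1-\phi_t(z)}\), which lies in \(H^\infty\) with \(\angle\lim_{z\to 1}\psi_t(z)=\phi_t'(1)^{-1}\) by \eqref{eq:nonTangentialLimitForQuotient}, the definition \(f_c=\hat f_c/\norm{\hat f_c}\) with \(\hat f_c=(1-z)^{-c}\) gives the pointwise identity \(f_c(\phi_t(z))=\psi_t(z)^{c}f_c(z)\), hence \(S_tf_c=(u_t\psi_t^{c})f_c\). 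Since point evaluations are bounded on \(A^p_\alpha\), integrating in \(t\) yields
\[
\int_0^1 S_tf_c\,dt=f_c\cdot\int_0^1 u_t\psi_t^{c}\,dt .
\]

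First I would compare this with the ``frozen'' function \(A_cf_c\), where \(A_c:=\int_0^1 u_t(1)\phi_t'(1)^{-c}\,dt\). Since \(A_c\to L\) as \(c\to c_0\) (by dominated convergence, with the majorant described below) and \(\norm{A_cf_c}=\abs{A_c}\), it suffices to show that \(\Delta_c:=\norm{\int_0^1 S_tf_c\,dt-A_cf_c}\to 0\) as \(c\to c_0\); the triangle inequality then gives the claim. By Minkowski's integral inequality,
\[
\Delta_c\leq\int_0^1\norm{\big(u_t\psi_t^{c}-u_t(1)\phi_t'(1)^{-c}\big)f_c}\,dt=:\int_0^1 D_t(c)\,dt ,
\]
so the goal reduces to a dominated-convergence statement for the single-operator quantities \(D_t(c)\).

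For fixed \(t\) I claim \(\lim_c D_t(c)=0\). Splitting \(\D\) into \(B(1,\delta)\cap\D\) and its complement, on the complement \((f_c)\in\APPRE(1)\) vanishes uniformly while \(u_t\psi_t^{c}\in L^p(\D,dA_\alpha)\) (as \(u_t\in A^p_\alpha\) and \(\psi_t\in H^\infty\)), so that part of the norm tends to \(0\); on \(B(1,\delta)\cap\D\) the factor \(u_t\psi_t^{c}\) extends continuously up to the boundary with value \(u_t(1)\phi_t'(1)^{-c}\) at \(1\) (by \ref{C:phiAnalyticAtCriticalPoint}, \ref{C:utContinuousAtCriticalPoint} and \eqref{eq:nonTangentialLimitForQuotient}), so the integrand is uniformly small for small \(\delta\) while \(\int_{B(1,\delta)\cap\D}\abs{f_c}^p\,dA_\alpha\leq 1\). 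This is exactly the mechanism of Lemma \ref{lem:BasicApproxEvalLemma}, the only novelty being that the exponent and the subtracted constant also depend on \(c\), which is harmless since \(\psi_t^{c}\to\psi_t^{c_0}\) and \(\phi_t'(1)^{-c}\to\phi_t'(1)^{-c_0}\) locally uniformly near \(1\). For the majorant I would bound \(D_t(c)\leq\norm{S_tf_c}+\abs{u_t(1)}\phi_t'(1)^{-c}\leq\norm{S_t}+\abs{u_t(1)}\big(1+\phi_t'(1)^{-c_0}\big)\) for \(c\leq c_0\); here \(\int_0^1\norm{S_t}\,dt<\infty\) by admissibility, and \(\abs{u_t(1)}\phi_t'(1)^{-c_0}\leq\norm{S_t}\), because the same-direction conditions---in particular the horocycle condition \ref{C:NeigborhoodContainsHorocylce}---force \(\lim_c\norm{S_tf_c}^p=\abs{u_t(1)}^p/\phi_t'(1)^{2+\alpha}\) (the reverse of the inequality in Lemma \ref{lem:AEofIntNormSt}, which for \(\alpha=0\) is its equality case and in general is the maximizer content underlying Proposition \ref{prop:GoodProp}), combined with \(\norm{S_tf_c}\leq\norm{S_t}\). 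Dominated convergence then gives \(\int_0^1 D_t(c)\,dt\to 0\), i.e. \(\Delta_c\to 0\).

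The main obstacle is precisely this domination step. All three \(c\)-dependences move simultaneously as \(c\to c_0\), so one cannot simply freeze \(G_{c_0}:=\int_0^1 u_t\psi_t^{c_0}\,dt\) and invoke Lemma \ref{lem:BasicApproxEvalLemma} directly---indeed \(G_{c_0}\) need not lie in \(H^\infty\), since \(u_t\) is merely in \(A^p_\alpha\). Comparing against the scalar multiple \(A_cf_c\) circumvents the \(H^\infty\) requirement, and the delicate point is securing an \(L^1(]0,1[)\) majorant for \(D_t(c)\) uniformly for \(c\) near \(c_0\); this is where the pointwise lower bound \(\abs{u_t(1)}\phi_t'(1)^{-c_0}\leq\norm{S_t}\) is essential, and it is also what guarantees that the target quantity \(\abs{L}\) is finite.
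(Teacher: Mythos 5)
Your overall strategy is the same as the paper's: reduce to \(\xi=1\), use the factorization \(S_tf_c=(u_t\psi_t^{c})f_c\), apply Minkowski's integral inequality to reduce to single-operator quantities, prove these tend to zero for each fixed \(t\) by splitting \(\D\) into \(B(1,\delta)\cap\D\) and its complement, and finish with dominated convergence in \(t\). The fixed-\(t\) step is fine (and more detailed than the paper's). The gap is in the domination step --- exactly the point you flag as delicate. Because you freeze the comparison constant at the \emph{moving} exponent \(c\) (comparing with \(A_cf_c\), \(A_c=\int_0^1 u_t(1)\phi_t'(1)^{-c}\,dt\)) instead of at \(c_0=(2+\alpha)/p\), your majorant for \(c\le c_0\) is \(\norm{S_t}+\abs{u_t(1)}\bigl(1+\phi_t'(1)^{-c_0}\bigr)\). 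The inequality \(\abs{u_t(1)}\phi_t'(1)^{-c_0}\le\norm{S_t}\) does control the last term, but the bare term \(\abs{u_t(1)}\) (which enters via \(\phi_t'(1)^{-c}\le 1\) when \(\phi_t'(1)\ge 1\)) is left uncontrolled: admissibility gives no bound on \(\int_0^1\abs{u_t(1)}\,dt\), and this integral actually diverges in the paper's own examples. For the generalized Hilbert matrix, \(u_t(1)=1/t\) and \(\phi_t'(1)=(1-t)/t\), so \(\abs{u_t(1)}\phi_t'(1)^{-c_0}=t^{c_0-1}(1-t)^{-c_0}\in L^1(]0,1[)\) while \(\abs{u_t(1)}=1/t\notin L^1(]0,1[)\); the Volterra symbols behave the same way, with \(\abs{u_t(1)}\asymp 1/t\) and \(\phi_t'(1)=1/t\). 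Hence both of your dominated-convergence applications (the claim \(A_c\to L\) and the claim \(\int_0^1 D_t(c)\,dt\to 0\)) rest on a majorant that need not be integrable, and as written the step fails; for the same reason it is not even clear that \(A_c\) is finite for \(c<c_0\).

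The repair is small but it is precisely the crux: compare with the constant frozen at \(c_0\), i.e.\ with \(u_t(1)\phi_t'(1)^{-c_0}\) (equivalently with \(Lf_c\)), from the start. Then \(D_t(c)\le\norm{S_tf_c}+\abs{u_t(1)}\phi_t'(1)^{-c_0}\norm{f_c}\le 2\norm{S_t}\), which is integrable by admissibility, and your fixed-\(t\) argument goes through unchanged, since \(u_t\psi_t^{c}\to u_t(1)\phi_t'(1)^{-c_0}\) near \(z=1\) as \(c\to c_0\) in any case. With that single change your proof coincides with the paper's.
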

\begin{proof}
It is sufficient to assume \(\xi=1\). By the triangle inequality followed by Minkowski's inequality, we have 
\begin{align*}
\abs{  \norm{  \int_0^1 S_t f_c \, dt}_{A^p_{\alpha}}    -   \abs{ \int_0^1  \frac{      u_t(1)     }{   \phi_t'(1)^{(2+\alpha)/p}  } \, dt  }  }&\leq  \bigg( \int_{\D}\abs{  f_c(z)     \int_0^1  \abs{ \frac{   S_t f_c   }{  f_c  } - \frac{u_t(1)}{ \phi_t'(1)^{(2+\alpha)/p}  }  } \, dt  }^p \, dA_\alpha(z) \bigg)^{\frac{1}{p}}\\
 &\hspace{-3cm}\leq  \int_0^1  \bigg( \int_{\D}\abs{ f_c(z)      \bigg( u_t(z)  \bigg(  \frac{  1 - z   }{  1 - \phi_t(z)   }\bigg)^c  - \frac{u_t(1)}{ \phi'(1)^{(2+\alpha)/p}  }  \bigg)  }^p \, dA_\alpha(z) \bigg)^{\frac{1}{p}}  \, dt.  
\end{align*}
Using the fact that
\[
  \bigg( \int_{\D}\abs{ f_c (z)     \bigg( u_t(z)  \bigg(  \frac{  1 - z   }{  1 - \phi_t(z)   }\bigg)^c  - \frac{u_t(1)}{ \phi'(1)^{(2+\alpha)/p}  }  \bigg)  }^p \, dA_\alpha(z) \bigg)^{\frac{1}{p}}   \leq 2\norm{S_t}_{\BOP(A^p_{\alpha})} 
\]
the dominated convergence theorem and \eqref{eq:nonTangentialLimitForQuotient} yield
\[
\lim_c \int_0^1   \bigg( \int_{\D}\abs{  f_c      \bigg( u_t(z)  \bigg(  \frac{  1 - z   }{  1 - \phi_t(z)   }\bigg)^c  - \frac{u_t(1)}{ \phi'(1)^{(2+\alpha)/p}  }  \bigg)  }^p \, dA_\alpha(z) \bigg)^{\frac{1}{p}}   \, dt = 0.
\]
\end{proof}
Combining Lemmas \ref{lem:AEofIntNormSt} and \ref{lem:AEofIntSt}, we have the following theorem.

\begin{thm}\label{thm:DominatingAE}
Let \(\{S_t:t\in]0,1[\}\) be an \hyperlink{admissible}{admissible} family with direction \(\xi\in\partial \D\). If \(\ARG u_t(\xi)\) is constant for all \(t\in ]0,1[\), where \(\xi\in\partial\D\) is the direction of the family, then all of the following quantities are equal:
\[
  \sup_{ (f_n)\in \APPRE}\limsup_n \norm{  \int_0^1 S_t f_n \, dt}_{A^p_{\alpha}} , \quad \sup_{ (f_n)\in \APPRE}\limsup_n \int_0^1\norm{S_t f_n}_{A^p_{\alpha}} \, dt,
\]
\[
 \lim_c \norm{  \int_0^1 S_t f_{c,\xi} \, dt}_{A^p_{\alpha}}, \quad  \lim_c  \int_0^1\norm{S_t f_{c,\xi}}_{A^p_{\alpha}} \, dt,
\]
\[
 \int_0^1  \frac{      \abs{ u_t(\xi) }    }{   \phi_t'(\xi)^{(2+\alpha)/p}  } \, dt .
\]
\end{thm}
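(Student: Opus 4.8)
The plan is to name the five quantities $A_1,\dots,A_5$ in the order listed and to close a cycle of inequalities $A_3\le A_1\le A_2\le A_5=A_3$, forcing $A_1=A_2=A_3=A_5$, and then to squeeze $A_4$ between $A_3$ and $A_2$. Throughout I assume $\xi=1$, as permitted by the lemmas. The first observation is that for an admissible family the fixed-point condition \ref{C:xiFixPointForPhi} together with \ref{C:PhiDoesNotTouchBoundaryExceptAtXi} forces $\phi_t^{-1}(\xi)=\{\xi\}$, so the sum in Lemma \ref{lem:AEofIntNormSt} collapses to the single term $\abs{u_t(\xi)}^p/\phi_t'(\xi)^{2+\alpha}$. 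Hence for every $(f_n)\in\APPRE$ and every $t$ one gets $\limsup_n\norm{S_tf_n}^p\le \abs{u_t(\xi)}^p/\phi_t'(\xi)^{2+\alpha}$, and taking $p$-th roots (the map $x\mapsto x^{1/p}$ is continuous and increasing, so it commutes with $\limsup$) yields the pointwise bound $\limsup_n\norm{S_tf_n}\le \abs{u_t(\xi)}/\phi_t'(\xi)^{(2+\alpha)/p}$.

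The crucial step, where the constant-argument hypothesis enters, is the identity $A_3=A_5$. By Lemma \ref{lem:AEofIntSt} we have $A_3=\abs{\int_0^1 u_t(\xi)\phi_t'(\xi)^{-(2+\alpha)/p}\,dt}$. Because $\phi_t'(\xi)>0$ is real and $\ARG u_t(\xi)$ is independent of $t$, the integrand $u_t(\xi)\phi_t'(\xi)^{-(2+\alpha)/p}$ equals a fixed unimodular constant times the nonnegative quantity $\abs{u_t(\xi)}\phi_t'(\xi)^{-(2+\alpha)/p}$; pulling this constant out of both the integral and the modulus gives $A_3=\int_0^1 \abs{u_t(\xi)}\phi_t'(\xi)^{-(2+\alpha)/p}\,dt=A_5$. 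This is exactly the place where cancellation is excluded: without the hypothesis the modulus of the signed integral could be strictly smaller than the integral of the modulus.

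It remains to assemble the inequalities. Since $f_{c,\xi}\in\APPRE$ we immediately have $A_3\le A_1$, while Minkowski's integral inequality $\norm{\int_0^1 S_tf_n\,dt}\le\int_0^1\norm{S_tf_n}\,dt$ gives $A_1\le A_2$. For the bound $A_2\le A_5$ I would integrate the pointwise bound from the first paragraph: since $\norm{S_tf_n}\le\norm{S_t}_{\BOP(A^p_\alpha)}$ with $\int_0^1\norm{S_t}_{\BOP(A^p_\alpha)}\,dt<\infty$ by admissibility, the reverse Fatou lemma applies and gives $\limsup_n\int_0^1\norm{S_tf_n}\,dt\le\int_0^1\limsup_n\norm{S_tf_n}\,dt\le A_5$ for every $(f_n)\in\APPRE$, whence $A_2\le A_5$. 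Combined with $A_3=A_5$ this closes the cycle $A_3\le A_1\le A_2\le A_5=A_3$, so $A_1=A_2=A_3=A_5$. Finally, applying Minkowski's integral inequality for each fixed $c$ and passing to the limit gives $A_3\le\liminf_c\int_0^1\norm{S_tf_{c,\xi}}\,dt$, while the membership $f_{c,\xi}\in\APPRE$ gives $\limsup_c\int_0^1\norm{S_tf_{c,\xi}}\,dt\le A_2=A_5=A_3$; these two bounds together show that the limit defining $A_4$ exists and equals $A_3$, completing the proof.

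I expect the main obstacle to be the clean interchange of limit and integral in the step $A_2\le A_5$ — one must verify that $\norm{S_t}_{\BOP(A^p_\alpha)}$ is a legitimate integrable dominating envelope so that reverse Fatou applies — together with the bookkeeping needed to pass between $\norm{\cdot}^p$, in which Lemma \ref{lem:AEofIntNormSt} is phrased, and $\norm{\cdot}$, in which the theorem is phrased. The conceptual heart, however, is the short computation $A_3=A_5$, since the constant-argument assumption is precisely what makes the distinguished family $(f_{c,\xi})$ a genuine maximizer rather than merely a competitor among all of $\APPRE$.
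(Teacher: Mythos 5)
Your proof is correct and takes essentially the paper's route: the paper proves this theorem precisely by ``combining Lemmas \ref{lem:AEofIntNormSt} and \ref{lem:AEofIntSt}'', and your cycle $A_3\le A_1\le A_2\le A_5=A_3$ --- with Lemma \ref{lem:AEofIntSt} plus the constant-argument hypothesis and $\phi_t'(\xi)>0$ giving $A_3=A_5$, Minkowski and reverse Fatou (with envelope $\norm{S_t}_{\BOP(A^p_\alpha)}$, integrable by \eqref{eq:Wxi}) giving the inequalities, and the final sandwich handling $A_4$ --- is exactly that combination spelled out. A further merit of your assembly is that it uses only the inequality part of Lemma \ref{lem:AEofIntNormSt}, so the restriction of its equality cases to $\alpha=0$ never enters.
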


Analysing the proof, it is evident that 
\[
 \lim_c \norm{  \int_0^1 S_t f_c \, dt}_{A^p_{\alpha}}  = \lim_c  \int_0^1\norm{S_t f_c}_{A^p_{\alpha}} \, dt =  \int_0^1 \angle\lim_{z\to1}  \frac{   \abs{   u_t(z)  }   }{   \phi_t'(z)^{(2+\alpha)/p}  } \, dt, 
\]
holds without assuming that \(\phi_t\) and \(u_t\) are continuous in \(\CLOSED{\D}\cap B(1,\epsilon) \) for some \(\epsilon>0\). The Julia-Carathéodory theorem yields the angular derivative exists, given that the nontangential limits at \(1\) exist for \(\phi_t'\) and \(u_t\). We only need to assume boundedness of certain quantities in order to use Lemma \ref{lem:BasicApproxEvalLemma}. However, a stricter continuity condition is essential for the proof of Proposition \ref{prop:GoodProp}.

Theorem \ref{thm:DominatingAE} shows that the approximate evaluation \((f_c)\) dominates the class \(\APPRE\) given that the direction of the family \(\{S_t : t\in]0,1[\}\) is \(1\). The following result shows that \((f_c)\) dominates a larger class of sequences.
\begin{prop}\label{prop:GoodProp}
If \(\{S_t:t\in]0,1[\}\) is an \hyperlink{admissible}{admissible} family with direction \(\xi\in\partial\D\), then
\[
\sup_{(f_n)\in W_0(\partial B_{A^p_{\alpha}})}  \limsup_n  \norm{S_t f_n}_{A^p_{\alpha}}  \leq  \lim_c  \norm{S_t f_{c,\xi}}_{A^p_{\alpha}},\quad t\in]0,1[.
\]
\end{prop}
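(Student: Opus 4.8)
The plan is to prove the two-sided estimate $\sup_{(f_n)\in W_0(\partial B_{A^p_\alpha})}\limsup_n \norm{S_tf_n}^p \le W(\xi) \le \lim_c \norm{S_t f_{c,\xi}}^p$, where $W(\xi):=\abs{u_t(\xi)}^p\,\phi_t'(\xi)^{-(2+\alpha)}$; since $(f_{c,\xi})\in\APPRE(\xi)\subset W_0(\partial B_{A^p_\alpha})$, these two bounds in fact force equality throughout, and the asserted inequality follows after taking $p$-th roots. By rotating I may assume $\xi=1$. Note that \ref{C:xiFixPointForPhi} and \ref{C:PhiDoesNotTouchBoundaryExceptAtXi} force $\phi_t^{-1}(\xi)=\{\xi\}$, so there is a single boundary point to track.

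The heart of the argument is the first inequality, for an arbitrary $(f_n)\in W_0(\partial B_{A^p_\alpha})$. I would split, for small $\epsilon>0$,
\[
\norm{S_tf_n}^p=\int_{B(\xi,\epsilon)\cap\D}\abs{u_t}^p\abs{f_n\circ\phi_t}^p\,dA_\alpha+\int_{\D\setminus B(\xi,\epsilon)}\abs{u_t}^p\abs{f_n\circ\phi_t}^p\,dA_\alpha.
\]
On the outer integral, \ref{C:PhiDoesNotTouchBoundaryExceptAtXi} confines $\phi_t(\D\setminus B(\xi,\epsilon))$ to a compact subset of $\D$; since a weakly null sequence tends to $0$ uniformly on compacta and $u_t\in A^p_\alpha$, this term tends to $0$ as $n\to\infty$. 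On the inner integral I would substitute $w=\phi_t(z)$, which is permissible because \ref{C:phiAnalyticAtCriticalPoint} gives $\phi_t'(\xi)>0$ and hence makes $\phi_t$ univalent on a sufficiently small $B(\xi,\epsilon)$ — crucially, global univalence is not needed. The substitution rewrites the inner integral as $\int_{\phi_t(B(\xi,\epsilon)\cap\D)}\abs{f_n(w)}^p W(w)\,dA_\alpha(w)$ with
\[
W(w)=\abs{u_t(\phi_t^{-1}(w))}^p\,\abs{(\phi_t^{-1})'(w)}^2\Big(\tfrac{1-\abs{\phi_t^{-1}(w)}^2}{1-\abs{w}^2}\Big)^{\alpha},
\]
and bounding $W$ by its supremum over the image together with $\int_\D\abs{f_n}^p\,dA_\alpha=1$ yields $\limsup_n\norm{S_tf_n}^p\le\sup_{w\in\phi_t(B(\xi,\epsilon)\cap\D)}W(w)$.

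It then suffices to let $\epsilon\to0$ and verify $\limsup_{w\to\xi}W(w)=W(\xi)$. The factors $\abs{u_t(\phi_t^{-1}(w))}^p$ and $\abs{(\phi_t^{-1})'(w)}^2$ converge to $\abs{u_t(\xi)}^p$ and $\phi_t'(\xi)^{-2}$ by \ref{C:phiAnalyticAtCriticalPoint} and \ref{C:utContinuousAtCriticalPoint}. The delicate factor is the Jacobian quotient $\tfrac{1-\abs{\phi_t^{-1}(w)}^2}{1-\abs{w}^2}$, for which only a $\limsup$ and no genuine limit is available along tangential approach; controlling it is the main obstacle. It is handled by the boundedness of $z\mapsto\tfrac{1-\abs{z}^2}{1-\abs{\phi_t(z)}^2}$ and the identity $\limsup_{z\to\xi}\tfrac{1-\abs{z}^2}{1-\abs{\phi_t(z)}^2}=\phi_t'(\xi)^{-1}$ recorded just after the definition of admissibility (with $z=\phi_t^{-1}(w)$); since the other two factors converge, the $\limsup$ of the product equals $\abs{u_t(\xi)}^p\phi_t'(\xi)^{-2}(\phi_t'(\xi)^{-1})^{\alpha}=W(\xi)$. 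Taking the supremum over $(f_n)$ gives $\sup_{(f_n)}\limsup_n\norm{S_tf_n}^p\le W(\xi)$.

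For the remaining inequality $\lim_c\norm{S_tf_{c,\xi}}^p\ge W(\xi)$ I would discard everything outside $B(\xi,\epsilon)\cap\D$ and, on that ball, replace $\abs{u_t(z)}$ and $\abs{\xi-\phi_t(z)}$ by the lower, resp.\ upper, bounds coming from their first-order behaviour at $\xi$ (legitimate by \ref{C:phiAnalyticAtCriticalPoint}, \ref{C:xiFixPointForPhi}, \ref{C:utContinuousAtCriticalPoint}, using $\phi_t(z)=\xi+\phi_t'(\xi)(z-\xi)+o(\abs{z-\xi})$). The surviving integral, after normalisation, is the mass $\int_{B(\xi,\epsilon)\cap\D}\abs{f_{c,\xi}}^p\,dA_\alpha$, which tends to $1$ as $c\to(2+\alpha)/p$ because $\abs{\xi-z}^{-cp}$ becomes barely non-integrable and concentrates at $\xi$; letting first $c\to(2+\alpha)/p$ and then $\epsilon\to0$ gives $\liminf_c\norm{S_tf_{c,\xi}}^p\ge W(\xi)$. (Alternatively this lower bound can be obtained from the horocycle furnished by \ref{C:NeigborhoodContainsHorocylce}, matching the equality noted after Lemma \ref{lem:AEofIntNormSt}.) Combining the two displayed inequalities, and applying the upper bound of the second paragraph to the particular sequence $(f_{c,\xi})$ to see $\lim_c\norm{S_tf_{c,\xi}}^p=W(\xi)$, proves the proposition.
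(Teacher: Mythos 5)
Your proposal is correct, and it is half the paper's proof and half a genuinely different argument. The upper bound for an arbitrary weakly null sequence coincides with the paper's: localize to \(B(\xi,\epsilon)\cap\D\) via \ref{C:PhiDoesNotTouchBoundaryExceptAtXi}, change variables using the local univalence coming from \ref{C:phiAnalyticAtCriticalPoint}, and control the resulting weight, whose delicate factor \(\bigl(\tfrac{1-\abs{z}^2}{1-\abs{\phi_t(z)}^2}\bigr)^{\alpha}\) is tamed in both proofs by the recorded identity \(\limsup_{z\to\xi}\tfrac{1-\abs{z}^2}{1-\abs{\phi_t(z)}^2}=\phi_t'(\xi)^{-1}\) (the paper simply writes your \(W\) as the product of \(F\) and this quotient). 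The divergence is in the lower bound for \((f_{c,\xi})\). The paper stays in the changed variables and argues that \(\lim_c\int_{\phi_t(B)}\abs{f_{c,\xi}}^p\,dA_\alpha=1\), which is exactly where the horocycle condition \ref{C:NeigborhoodContainsHorocylce} enters: \(\phi_t(B)\) must capture the asymptotic, nontangentially concentrated mass of \(f_{c,\xi}\). You instead never change variables: you compare pointwise, \(\abs{f_{c,\xi}(\phi_t(z))}\ge(\phi_t'(\xi)+\delta)^{-c}\abs{f_{c,\xi}(z)}\) on \(B(\xi,\epsilon)\cap\D\), using the expansion \(\phi_t(z)=\xi+\phi_t'(\xi)(z-\xi)+o(\abs{z-\xi})\) --- the same segment-integration estimate the paper uses to prove \(\tfrac{\xi-z}{\xi-\phi_t(z)}\in H^\infty\) --- and then use that the mass of \(f_{c,\xi}\) concentrates at \(\xi\). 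This buys you two things: condition \ref{C:NeigborhoodContainsHorocylce} is never used, and you avoid bounding the quotient \(\tfrac{1-\abs{z}^2}{1-\abs{\phi_t(z)}^2}\) from below, which is impossible pointwise in the tangential part of \(B(\xi,\epsilon)\cap\D\) (there the quotient can be near \(0\)); that is precisely the spot where the paper's ``similar calculations'' lower bound, stated there for arbitrary weakly null sequences, needs extra care, and it is harmless in the paper only because the bound is ultimately applied to \((f_{c,\xi})\), whose tangential mass vanishes in the limit. What the paper's route buys in exchange is the finer identity \eqref{ineqForBestAPPRE}, expressing \(\limsup_n\norm{S_tf_n}^p\) through the asymptotic mass that \((f_n)\) places on \(\phi_t(B)\), which identifies the maximizing sequences and explains the role of \ref{C:NeigborhoodContainsHorocylce}; your argument yields the proposition itself but not this extra structural information.
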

\begin{proof}
It is sufficient to consider \(\xi=1\). Let \(t\in]0,1[\), \((f_n)\in W_0(\partial B_{A^p_{\alpha}})\). Let \(\delta\in]0,\phi_t'(1)^{-\alpha}[\) and choose \(\epsilon>0\) small enough so that
\[
F\colon z\mapsto  \abs{ u_t\big(  {\phi_t}|_{\CLOSED{B}}  \hspace{-4pt}^{-1}(z)  \big)}^p   \abs{  ({\phi_t}|_{ \CLOSED{B}} \hspace{-4pt}^{-1})'(z)  }^2
\]
is continuous on the compact set \(\phi_t(\CLOSED{B}) \), \(\abs{F(z)-F(1)}<\delta\) for every \(z\in \phi_t(\CLOSED{B}) \) and 
\[
\sup_{z\in B} \bigg( \frac{ 1-\abs{z}^2  }{   1-\abs{\phi_t(z)}^2     } \Bigg)^\alpha  \leq \phi_t'(1)^{-\alpha} + \delta, 
\]
where \(B:=\D\cap B(1,\epsilon) \) and \(F(1) := \lim_{z\in\phi_t(B),{z\to 1} } F(z)\). 

Since   \(\CLOSED{\phi_t(\D)}\cap \partial \D = \{1\}\) and \(\phi_t^{-1}(1)=\{1\}\),
\begin{align*}
 \limsup_n \norm{S_t f_n}_{A^p_{\alpha}}^p &=   \limsup_n \int_{\D} \abs{ f_n(\phi_t(z))  u_t(z)}^p  \, dA_\alpha(z) =  \limsup_n \int_{B} \abs{ f_n(\phi_t(z))  u_t(z)}^p  \, dA_\alpha(z)\\
&\leq   (\phi_t'(1)^{-\alpha} + \delta)  \limsup_n   \int_{ \phi_t(B)}  \abs{f_n(z)}^p  \abs{ u_t\big(  {\phi_t}|_{B}  \hspace{-4pt}^{-1}(z)  \big)}^p   \abs{  ({\phi_t}|_{B} \hspace{-4pt}^{-1})'(z)  }^2  \, dA_\alpha(z)\\
&\leq (\phi_t'(1)^{-\alpha} + \delta) ( F(1) + \delta )    \limsup_n \int_{ \phi_t(B)}  \abs{f_n(z)}^p  \, dA_\alpha(z). 
\end{align*}
Similar calculations yield 
\[
 \limsup_n \norm{S_t f_n}_{A^p_{\alpha}}^p  \geq (\phi_t'(1)^{-\alpha} - \delta) (  F(1)  -\delta  )  \limsup_n \int_{ \phi_t(B)}  \abs{f_n(z)}^p  \, dA_\alpha(z),
\]
if \(\epsilon>0\) is small enough. Letting \(\delta\to 0\), we can conclude
\begin{equation}\label{ineqForBestAPPRE}
 \limsup_n \norm{S_t f_n}_{A^p_{\alpha}}^p  = \phi_t'(1)^{-\alpha}   F(1)  \limsup_n  \int_{ \phi_t(B)}  \abs{f_n(z)}^p  \, dA_\alpha(z) \leq \phi_t'(1)^{-\alpha}   F(1) .
\end{equation}
The inequality in \eqref{ineqForBestAPPRE} is an equality when \( (f_n) = (f_{c_n}) \) since \(\phi_t(B)\) contains all nontangential limits due to \ref{C:NeigborhoodContainsHorocylce}.
\end{proof}

We are now ready to present the main theorem.

\begin{thm}\label{thm:essentialNormEquality}
If \(\{S_t : t\in ]0,1[\}\) is an \hyperlink{admissible}{admissible} family with direction \(\xi\in\partial \D\) and \(\ARG u_t(\xi)\) is constant for all \(t\in ]0,1[\), then
\[
\sup_{ (f_n) \in W_0(\partial B_{A^p_{\alpha}})}     \limsup_n  \norm{ \int_0^1 S_t f_n  \,  dt   }_{A^p_{\alpha}}   =       \int_0^1    \sup_{ (f_n) \in W_0(\partial B_{A^p_{\alpha}})}       \limsup_n     \norm{S_t  f_n  }_{A^p_{\alpha}}  \,  dt    = \lim_c   \int_0^1 \norm{S_t  f_{c,\xi}   }_{A^p_{\alpha}}  \,  dt  
\]
Moreover,
\[
\norm{ \int_0^1 S_t  \,  dt   }_{e,\BOP(A^p_{\alpha})}  = \int_0^1 \norm{ S_t   }_{e,\BOP(A^p_{\alpha})}   \,  dt.  
\]
\end{thm}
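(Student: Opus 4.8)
The plan is to recognize each of the three quantities through the property-$(M_p)$ representation \eqref{eq:essNormRepresentation} and then to pin down the essential norm of $\int_0^1 S_t\,dt$ by squeezing it between the upper bound of Proposition~\ref{prop:basicIneqEsentialNorm} and a matching lower bound produced by the single test sequence $(f_{c,\xi})$. Concretely, I would first apply \eqref{eq:essNormRepresentation}: since $A^p_\alpha$ has property $(M_p)$ and $\big(\int_0^1 S_t\,dt\big)f_n=\int_0^1 S_tf_n\,dt$, the first quantity is exactly $\norm{\int_0^1 S_t\,dt}_{e,\BOP(A^p_\alpha)}$, while the inner supremum in the second quantity equals $\norm{S_t}_{e,\BOP(A^p_\alpha)}$ for each fixed $t$, so the second quantity is $\int_0^1\norm{S_t}_{e,\BOP(A^p_\alpha)}\,dt$. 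In particular the asserted operator identity (the second display) is nothing but the equality of the first two quantities, so the whole statement reduces to showing the three quantities coincide.

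For the middle quantity I would use Proposition~\ref{prop:GoodProp}: for each fixed $t$ it gives $\sup_{(f_n)\in W_0(\partial B_{A^p_\alpha})}\limsup_n\norm{S_tf_n}\le\lim_c\norm{S_tf_{c,\xi}}$, and the reverse inequality is automatic because $(f_{c,\xi})\in\APPRE\subset W_0(\partial B_{A^p_\alpha})$. Hence $\norm{S_t}_e=\lim_c\norm{S_tf_{c,\xi}}=\abs{u_t(\xi)}\,\phi_t'(\xi)^{-(2+\alpha)/p}$ pointwise in $t$. Integrating and comparing with Theorem~\ref{thm:DominatingAE}, which already identifies $\lim_c\int_0^1\norm{S_tf_{c,\xi}}\,dt$ with $\int_0^1\abs{u_t(\xi)}\,\phi_t'(\xi)^{-(2+\alpha)/p}\,dt$, shows that the second and third quantities agree.

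It remains to match the first quantity. The bound $\le$ is Proposition~\ref{prop:basicIneqEsentialNorm}. For the reverse bound I would test $\int_0^1 S_t\,dt$ against the weakly null sequence $(f_{c,\xi})$ alone: \eqref{eq:essNormRepresentation} and Lemma~\ref{lem:AEofIntSt} give
\[
\norm{\int_0^1 S_t\,dt}_{e,\BOP(A^p_\alpha)}\ \ge\ \lim_c\norm{\int_0^1 S_tf_{c,\xi}\,dt}_{A^p_\alpha}\ =\ \abs{\int_0^1\frac{u_t(\xi)}{\phi_t'(\xi)^{(2+\alpha)/p}}\,dt}.
\]
Here the hypothesis that $\ARG u_t(\xi)$ is constant is decisive: writing $u_t(\xi)=\abs{u_t(\xi)}e^{i\theta_0}$ with $\theta_0$ independent of $t$ and recalling $\phi_t'(\xi)>0$, the integrand carries a fixed phase, so the modulus of the integral equals $\int_0^1\abs{u_t(\xi)}\,\phi_t'(\xi)^{-(2+\alpha)/p}\,dt=\int_0^1\norm{S_t}_e\,dt$. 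This bounds the first quantity below by the second, and the two bounds together force equality.

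The main obstacle is precisely this lower bound. One needs a single weakly null sequence that simultaneously realizes the essential norm of the averaged operator, and this is possible only because the constant-argument condition rules out cancellation: without it, $\abs{\int_0^1 u_t(\xi)\phi_t'(\xi)^{-(2+\alpha)/p}\,dt}$ would be strictly smaller than $\int_0^1\abs{u_t(\xi)}\phi_t'(\xi)^{-(2+\alpha)/p}\,dt$, and $(f_{c,\xi})$ would underestimate $\int_0^1\norm{S_t}_e\,dt$. Everything else is assembly of the already-proved Propositions~\ref{prop:basicIneqEsentialNorm} and~\ref{prop:GoodProp}, Lemma~\ref{lem:AEofIntSt}, and Theorem~\ref{thm:DominatingAE}.
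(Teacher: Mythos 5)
Your proposal is correct and takes essentially the same route as the paper: both arguments squeeze the three quantities using Proposition~\ref{prop:GoodProp} (domination by \((f_{c,\xi})\)), the value of the test sequence from Lemma~\ref{lem:AEofIntSt}/Theorem~\ref{thm:DominatingAE} (where constancy of \(\ARG u_t(\xi)\) together with \(\phi_t'(\xi)>0\) rules out cancellation), and the Minkowski/dominated-convergence upper bound, converting to essential norms via the \((M_p)\) property. The only cosmetic differences are that you invoke Proposition~\ref{prop:basicIneqEsentialNorm} and pass through the explicit value \(\int_0^1\abs{u_t(\xi)}\,\phi_t'(\xi)^{-(2+\alpha)/p}\,dt\) (your pointwise identity \(\lim_c\norm{S_tf_{c,\xi}}=\abs{u_t(\xi)}\phi_t'(\xi)^{-(2+\alpha)/p}\) is exactly Lemma~\ref{lem:AEofIntSt} applied to the constant-in-\(t\) family, a citation worth making explicit), whereas the paper keeps the whole chain in the \(W_0\)-supremum form and applies dominated convergence directly.
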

\begin{proof}
Let \(\xi\in\partial\D\) be the direction of \(\{S_t : t\in ]0,1[\}\). An application of Minkowski's inequality followed by dominated convergence, Proposition \ref{prop:GoodProp} and Theorem \ref{thm:DominatingAE} yield
\begin{align*}
&\sup_{ (f_n) \in W_0(\partial B_{A^p_{\alpha}})}     \limsup_n  \norm{ \int_0^1 S_t f_n  \,  dt   }_{A^p_{\alpha}}  \leq        \int_0^1              \sup_{ (f_n) \in W_0(\partial B_{A^p_{\alpha}})}       \limsup_n   \norm{S_t  f_n  }_{A^p_{\alpha}}  \,  dt  \\
&= \int_0^1 \lim_c    \norm{S_t  f_{c,\xi}   }_{A^p_{\alpha}}  \,  dt  = \lim_c  \norm{ \int_0^1 S_t  f_{c,\xi}     \,  dt  }_{A^p_{\alpha}}   \leq  \sup_{ (f_n) \in W_0(\partial B_{A^p_{\alpha}})}     \limsup_n  \norm{ \int_0^1 S_t f_n  \,  dt   }_{A^p_{\alpha}}. 
\end{align*}

Concerning the essential norm, it is sufficient to notice that \(A^p_\alpha\) has the \((M_p)\) property, in which case \eqref{eq:essNormRepresentation} coincides with the essential norm. 
\end{proof}

\section{Remarks concerning the necessity of admissibility for \eqref{eq:equalityOfEssentialNorm} to hold}\label{sec:NecessaryConditions}
In Theorem \ref{thm:essentialNormEquality}, we obtained a sufficient conditions for \eqref{eq:equalityOfEssentialNorm} to hold. In this section we will mention some necessary conditions for the equality to hold. One situation where \eqref{eq:equalityOfEssentialNorm} fails is mentioned right after Proposition \ref{prop:basicIneqEsentialNorm}. Another straightforward condition is obtained from the following:

\begin{prop}\label{prop:simpleNecCond}
Assume \(\{S_t:t\in]0,1[\}\subset\BOP(A^p_{\alpha}) \) is a family of operators such that \(t\mapsto S_tf\) is continuous for every \(f\in A^p_{\alpha}\) and \(\int_0^1 \norm{S_t}_{\BOP(A^p_{\alpha})} \, dt<\infty\). If there exists no sequence \((f_n)\in W_0(\partial B_{A^p_\alpha})\) such that \(\lim_n\norm{S_t(f_n)} = \norm{S_t}_e\) for almost every \(t\in]0,1[\), then
\[
\norm{ \int_0^1 u_tC_{\phi_t}  \,  dt   }_{e,\BOP(A^p_{\alpha})} < \int_0^1 \norm{ u_tC_{\phi_t}   }_{e,\BOP(A^p_{\alpha})}   \,  dt.  
\]
\end{prop}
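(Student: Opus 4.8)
The plan is to prove the contrapositive: assuming the equality
\[
\norm{ \int_0^1 S_t \, dt }_{e,\BOP(A^p_{\alpha})} = \int_0^1 \norm{S_t}_{e,\BOP(A^p_{\alpha})} \, dt ,
\]
I will manufacture a single sequence \((f_n)\in W_0(\partial B_{A^p_{\alpha}})\) with \(\lim_n \norm{S_t f_n} = \norm{S_t}_{e}\) for almost every \(t\), contradicting the hypothesis. Throughout I use that \(A^p_{\alpha}\) has property \((M_p)\), so the essential norm is given by \eqref{eq:essNormRepresentation}, and that \(W_0(\partial B_{A^p_{\alpha}})\) consists exactly of the unit-norm sequences converging to \(0\) uniformly on compact subsets of \(\D\); Proposition \ref{prop:basicIneqEsentialNorm} already provides that \(\int_0^1 S_t\,dt\) is a well-defined bounded operator and that ``\(\le\)'' holds. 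The backbone is the chain, valid for every fixed \((f_n)\in W_0(\partial B_{A^p_{\alpha}})\),
\[
\limsup_n \norm{ \int_0^1 S_t f_n \, dt } \le \limsup_n \int_0^1 \norm{S_t f_n} \, dt \le \int_0^1 \limsup_n \norm{S_t f_n} \, dt \le \int_0^1 \norm{S_t}_e \, dt ,
\]
in which the first step is Minkowski's inequality, the second is the reverse Fatou lemma with the integrable majorant \(t\mapsto \norm{S_t}_{\BOP(A^p_{\alpha})}\) (which dominates each \(\norm{S_t f_n}\) since \(\norm{f_n}=1\)), and the last is \(\limsup_n\norm{S_t f_n}\le\norm{S_t}_e\) from \eqref{eq:essNormRepresentation}.

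Set \(B:=\int_0^1 \norm{S_t}_e\,dt\). Using \eqref{eq:essNormRepresentation} for the integral operator together with the assumed equality, for each \(k\) I first choose \((f_n^{(k)})_n\in W_0(\partial B_{A^p_{\alpha}})\) with \(\limsup_n \norm{\int_0^1 S_t f_n^{(k)}\,dt} > B-1/k\); the first inequality of the chain then yields \(\limsup_n \int_0^1 \norm{S_t f_n^{(k)}}\,dt > B-1/k\), so infinitely many \(n\) satisfy \(\int_0^1 \norm{S_t f_n^{(k)}}\,dt > B-1/k\). Fixing a compact exhaustion \(K_1\subset K_2\subset\cdots\) of \(\D\) and using that, for each fixed \(k\), \(f_n^{(k)}\to 0\) uniformly on compacts as \(n\to\infty\), I then pick among these infinitely many indices one, \(m_k\), large enough that also \(\sup_{z\in K_k}\abs{f_{m_k}^{(k)}(z)}<1/k\). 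The diagonal \(g_k:=f_{m_k}^{(k)}\) is then a unit-norm sequence converging to \(0\) uniformly on compacts, so \((g_k)\in W_0(\partial B_{A^p_{\alpha}})\), and it satisfies \(\int_0^1 \norm{S_t g_k}\,dt > B-1/k\).

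It remains to upgrade this integral control to an almost-everywhere pointwise limit. The crucial point is that, since \((g_k)\) is \emph{itself} weakly null, \eqref{eq:essNormRepresentation} gives \(\limsup_k \norm{S_t g_k}\le \norm{S_t}_e\) for every \(t\), even though a single \(\norm{S_t g_k}\) may well exceed \(\norm{S_t}_e\). Writing \(w_k(t):=\norm{S_t}_e-\norm{S_t g_k}\), the choice of \(g_k\) gives \(\int_0^1 w_k\,dt<1/k\), while Fatou's lemma (legitimate as \(w_k\ge -\norm{S_t}_{\BOP(A^p_{\alpha})}\) is bounded below by an integrable function) gives \(\int_0^1 \liminf_k w_k\,dt\le \liminf_k\int_0^1 w_k\,dt\); since \(\liminf_k w_k=\norm{S_t}_e-\limsup_k\norm{S_t g_k}\ge 0\) a.e., these force \(\int_0^1 w_k\,dt\to 0\) and \(\limsup_k\norm{S_t g_k}=\norm{S_t}_e\) a.e. Finally I promote the \(\limsup\) to a genuine limit: the negative parts \(w_k^-=(\norm{S_t g_k}-\norm{S_t}_e)^+\) tend to \(0\) a.e. (again by \(\limsup_k\norm{S_t g_k}\le\norm{S_t}_e\)) and are dominated, so \(\int_0^1 w_k^-\,dt\to 0\) by dominated convergence; combined with \(\int_0^1 w_k\,dt\to 0\) this gives \(\int_0^1 w_k^+\,dt\to 0\), whence a subsequence \((g_{k_j})\) has \(w_{k_j}^+\to 0\) a.e. and therefore \(\lim_j \norm{S_t g_{k_j}}=\norm{S_t}_e\) a.e. This \((g_{k_j})\) is the forbidden sequence, so the equality is impossible and strict inequality must hold. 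I expect the main obstacles to be the two passages from integral to pointwise information — the weak-nullity-preserving diagonalization and the \(\limsup\)-to-\(\lim\) promotion — both of which are delicate precisely because \(\norm{S_t g_k}\) can overshoot \(\norm{S_t}_e\), so \(w_k\) is not pointwise nonnegative.
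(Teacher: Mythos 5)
Your proof is correct. It rests on the same core chain the paper uses, namely that for every fixed \((f_n)\in W_0(\partial B_{A^p_\alpha})\),
\[
\limsup_n \norm{\int_0^1 S_t f_n \, dt} \;\leq\; \limsup_n \int_0^1 \norm{S_t f_n}\, dt \;\leq\; \int_0^1 \limsup_n \norm{S_t f_n}\, dt \;\leq\; \int_0^1 \norm{S_t}_e \, dt,
\]
via Minkowski, reverse Fatou with majorant \(\norm{S_t}\), and \eqref{eq:essNormRepresentation} — but you go well beyond what the paper writes. The paper's proof is a single sentence: ``By Minkowski's inequality and the assumption,'' the supremum over all weakly null sequences is strictly below \(\int_0^1\norm{S_t}_e\,dt\). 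As stated, this leaves two nontrivial points to the reader: first, a supremum of quantities each strictly below \(B\) need not itself be strictly below \(B\); second, the hypothesis rules out sequences whose \emph{limit} attains \(\norm{S_t}_e\) a.e., whereas the chain naturally produces information about \(\limsup\)'s. Your contrapositive argument supplies exactly the missing mechanism: from assumed equality you extract near-optimal sequences \((f^{(k)}_n)_n\), diagonalize using the characterization of \(W_0(\partial B_{A^p_\alpha})\) as unit-norm sequences tending to \(0\) uniformly on compacts (this is where weak nullity of the diagonal \((g_k)\) is secured, and it is the only place the function-space structure is needed), and then convert the integral near-optimality \(\int_0^1 w_k\,dt<1/k\) together with \(\int_0^1 w_k^-\,dt\to 0\) (dominated convergence, using \(\limsup_k\norm{S_tg_k}\leq\norm{S_t}_e\) pointwise) into \(L^1\)-convergence \(w_k\to 0\), hence a.e. convergence along a subsequence — which is precisely the forbidden sequence with genuine limits. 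Combined with Proposition \ref{prop:basicIneqEsentialNorm} for ``\(\leq\)'', negating equality gives the strict inequality. In short: same skeleton as the paper, but your diagonalization-plus-\(L^1\) extraction is a genuine and, in my view, necessary completion of the paper's terse argument; the only point you share with the paper rather than resolve is the implicit assumption that \(t\mapsto\norm{S_t}_e\) is measurable, which the statement itself presupposes by integrating it.
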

\begin{proof}
By Minkowski's inequality and the assumption, we have
\begin{align*}
\sup_{ (f_n) \in W_0(\partial B_{A^p_{\alpha}})}     \limsup_n  \norm{ \int_0^1 S_t f_n  \,  dt   }_{A^p_{\alpha}}  <  \int_0^1 \norm{ u_tC_{\phi_t}   }_{e,\BOP(A^p_{\alpha})}   \,  dt.  
\end{align*}
\end{proof}

Assume \(\{S_t : t\in]0,1[\}\) is an \hyperlink{admissible}{admissible} family except for the fact that both \(1\) and \(-1\) is a direction, meaning that among the conditions \ref{C:phiAnalyticAtCriticalPoint} -- \ref{C:utContinuousAtCriticalPoint}, \(B(\xi,\epsilon)\) is replaced by \( B(1,\epsilon) \cup B(-1,\epsilon) \) and \ref{C:xiFixPointForPhi} is replaced by \( \phi_t(1) = 1, \phi_t(-1) = -1 \). Furthermore, similar changes are done to \eqref{eq:Wxi}, to obtain a condition we denote by \hypertarget{Wt}{$(W_{-1,1})$}.  Examples of composition symbols satisfying the conditions can be constructed from Riemann maps \(\psi_t\) mapping \(\D\) onto the interior of a \(C^2\)-curve in \(\CLOSED{\D}\) such that \ref{C:phiAnalyticAtCriticalPoint}, \ref{C:PhiDoesNotTouchBoundaryExceptAtXi} and \ref{C:NeigborhoodContainsHorocylce} are satisfied. Notice that the \(C^2\) condition grants that \(\psi_t\) can be continuosly extended to the boundary since the curve is Dini-smooth \cite[p.~42]{Pommerenke-1992}. Moreover, by premapping the disk \(\D\) by a suitable automorphism \(\sigma_t\) of the disk, the points  \(-1\) and \(1\) from the extended domain \(\CLOSED{\D}\) can be mapped to the points on \(\partial \D\) that are mapped by \(\psi_t\) to \(-1\) and \(1\), making  \(\psi_t\circ \sigma_t\) a candidate for \(\phi_t\). Define \(g_{c,\theta} := \hat{g}_{c,\theta} /  \norm{ \hat{g}_{c,\theta} }\), where \(\hat{g}_{c,\theta}  :=  \theta (1+z)^{-c} + (1-\theta)(1-z)^{-c} \).

\begin{lem}\label{lem:EstimatesWhenTwoDirections}
Assume \(\{S_t : t\in]0,1[\}\) has the directions \(-1\) and \(1\) and satisfies \hyperlink{Wt}{$(W_{-1,1})$}. Then 
\[
\lim_c \norm{  \int_0^1 S_t  g_{c,\theta}  \, dt   }_{A^p_{\alpha}} =      \Bigg(   \frac{  \theta^p }{    \theta^p + (1-\theta)^p }\abs{  \int_0^1  \frac{ u_t(1) }{ \phi_t'(1)^\frac{2+\alpha}{p}}   \, dt    }^p   +   \frac{  (1-\theta)^p }{    \theta^p + (1-\theta)^p } \abs{  \int_0^1  \frac{ u_t(-1) }{ \phi_t'(-1)^\frac{2+\alpha}{p}}   \, dt    }^p     \Bigg)^{\frac{1}{p}}
\]
and 
\begin{equation}\label{eq:gcTwoDirections}
\lim_c \int_0^1  \norm{   S_t  g_{c,\theta}     }_{A^p_{\alpha}}  \, dt =    \int_0^1  \Bigg(   \frac{  \theta^p }{    \theta^p + (1-\theta)^p }\abs{   \frac{ u_t(1) }{ \phi_t'(1)^\frac{2+\alpha}{p}}   }^p     +   \frac{  (1-\theta)^p }{    \theta^p + (1-\theta)^p } \abs{    \frac{ u_t(-1) }{ \phi_t'(-1)^\frac{2+\alpha}{p}}   }^p    \Bigg)^{\frac{1}{p}} \, dt .
\end{equation}
\end{lem}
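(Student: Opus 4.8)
The plan is to write \(\hat g_{c,\theta}\) as a superposition of the two single-direction test functions and exploit that these live, asymptotically, on disjoint boundary neighborhoods. Put \(N_c:=\norm{\smash{\hat f_{c,1}}}_{A^p_\alpha}\); since \(z\mapsto -z\) preserves \(dA_\alpha\) and merely turns \((1-z)^{-c}\) into \((1+z)^{-c}\), the vector \(f_{c,-1}:=(1+z)^{-c}/N_c\) is a unit vector in \(\APPRE(-1)\) (it agrees with the normalized \(\hat f_{c,-1}\) up to a unimodular factor, which is immaterial below). With \(f_{c,1}=(1-z)^{-c}/N_c\in\APPRE(1)\) we then have
\[
g_{c,\theta}=\lambda_c\big(\theta f_{c,-1}+(1-\theta)f_{c,1}\big),\qquad \lambda_c:=N_c/\norm{\smash{\hat g_{c,\theta}}}_{A^p_\alpha}.
\]
The whole lemma will come from applying a bounded linear map — namely \(\int_0^1 S_t\,\cdot\,dt\) for the first identity, and \(S_t\) with \(t\) fixed for the second — and checking that disjoint concentration at \(\pm1\) is inherited.

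First I would isolate the elementary splitting tool. Suppose \((a_c),(b_c)\subset A^p_\alpha\) have uniformly bounded norm and, for every small \(\epsilon>0\),
\[
\int_{\D\setminus B(1,\epsilon)}\abs{a_c}^p\,dA_\alpha\to0,\quad \int_{B(1,\epsilon)\cap\D}\abs{a_c}^p\,dA_\alpha\to L_a,\quad \int_{\D\setminus B(-1,\epsilon)}\abs{b_c}^p\,dA_\alpha\to0,\quad \int_{B(-1,\epsilon)\cap\D}\abs{b_c}^p\,dA_\alpha\to L_b.
\]
Splitting \(\D\) into the disjoint pieces \(B(1,\epsilon)\), \(B(-1,\epsilon)\) and the remainder and using \(\big|\,\norm{x+y}-\norm{x}\,\big|\le\norm{y}\) on each piece to discard the globally small summand, one obtains \(\norm{s a_c+r b_c}_{A^p_\alpha}^p\to s^pL_a+r^pL_b\) for all \(s,r\ge0\) (in our application \(s,r\in\{\theta,1-\theta\}\ge0\), recalling \(\theta\in[0,1]\)). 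Applied to \((a_c,b_c)=(f_{c,1},f_{c,-1})\) — both unit vectors, so \(L_a=L_b=1\), the tails vanishing because \(\int_{\D\setminus B(1,\epsilon)}\abs{1-z}^{-cp}\,dA_\alpha\) stays bounded while \(N_c^p\to\infty\) — this gives \(\norm{\smash{\hat g_{c,\theta}}}_{A^p_\alpha}^p/N_c^p\to\theta^p+(1-\theta)^p\), hence \(\lambda_c^p\to(\theta^p+(1-\theta)^p)^{-1}\).

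For the first identity set \(P_c:=\int_0^1 S_t f_{c,1}\,dt\) and \(Q_c:=\int_0^1 S_t f_{c,-1}\,dt\), so that \(\int_0^1 S_t g_{c,\theta}\,dt=\lambda_c(\theta Q_c+(1-\theta)P_c)\). Lemma \ref{lem:AEofIntSt}, whose proof only invokes the hypotheses near the relevant direction (available here at both \(\pm1\)), gives \(\norm{P_c}\to\abs{I_1}\) and \(\norm{Q_c}\to\abs{I_{-1}}\) with \(I_{\pm1}:=\int_0^1 u_t(\pm1)\phi_t'(\pm1)^{-(2+\alpha)/p}\,dt\). To see that \(P_c\) concentrates at \(1\), I would bound the tail by Minkowski's integral inequality,
\[
\Big(\int_{\D\setminus B(1,\epsilon)}\abs{P_c}^p\,dA_\alpha\Big)^{1/p}\le\int_0^1\Big(\int_{\D\setminus B(1,\epsilon)}\abs{S_t f_{c,1}}^p\,dA_\alpha\Big)^{1/p}\,dt,
\]
where the inner integral tends to \(0\) for each fixed \(t\) (on \(\D\setminus B(1,\epsilon)\) the symbol \(\phi_t\) stays away from \(1\), so \(\abs{f_{c,1}\circ\phi_t}\le C_t(\epsilon)/N_c\to0\) uniformly) and is dominated by \(\norm{S_t}^p\in L^1(]0,1[)\); dominated convergence removes the tail, whence \(\int_{B(1,\epsilon)}\abs{P_c}^p=\norm{P_c}^p-o(1)\to\abs{I_1}^p\), and symmetrically for \(Q_c\) at \(-1\). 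Feeding \((a_c,b_c)=(P_c,Q_c)\), \(L_a=\abs{I_1}^p\), \(L_b=\abs{I_{-1}}^p\), \((s,r)=(1-\theta,\theta)\) into the splitting tool and multiplying by \(\lambda_c^p\) yields the first displayed limit after a \(p\)-th root.

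The second identity is the same mechanism carried out for fixed \(t\) and then integrated. Here \(S_t f_{c,1}\) concentrates at \(\phi_t^{-1}(1)=\{1\}\) and \(S_t f_{c,-1}\) at \(\phi_t^{-1}(-1)=\{-1\}\), with \(\norm{S_t f_{c,\pm1}}^p\to\abs{u_t(\pm1)}^p\phi_t'(\pm1)^{-(2+\alpha)}\) by the computation behind Proposition \ref{prop:GoodProp} (the equality case of \eqref{ineqForBestAPPRE}, valid for the sequence \(f_c\) and every \(\alpha\ge0\)), while the tails vanish exactly as above. The splitting tool together with \(\lambda_c^p\to(\theta^p+(1-\theta)^p)^{-1}\) then shows that \(\norm{S_t g_{c,\theta}}^p\) converges to the \(p\)-th power of the integrand in \eqref{eq:gcTwoDirections}; finally \(\norm{S_t g_{c,\theta}}\le\norm{S_t}\in L^1(]0,1[)\) lets me pass the limit through \(\int_0^1\cdot\,dt\) by dominated convergence. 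I expect the main obstacle to be precisely the asymptotic-orthogonality step: one must verify that on each localizing piece the globally small summand stays negligible after the operators have been applied, which is why the uniform tail estimates above — rather than mere norm convergence of \(P_c,Q_c\) and \(S_tf_{c,\pm1}\) — are the crux of the argument.
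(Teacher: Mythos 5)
Your proposal is correct, but it runs on a genuinely different mechanism from the paper's proof. The paper never decomposes \(g_{c,\theta}\) into the two single-direction test functions; instead it studies the ratio \(S_t g_{c,\theta}/g_{c,\theta}\), expands it algebraically into two terms (one carrying \(((1-z)/(1-\phi_t(z)))^c\), one carrying \(((1+z)/(1+\phi_t(z)))^c\), each damped by an explicit weight built from \(\theta(1-z)^c\) and \((1-\theta)(1+z)^c\)), splits the disk into the half-disks \(\{\Re z>0\}\) and \(\{\Re z<0\}\), and shows via Minkowski's inequality and dominated convergence that \(\chi_{\Re \gtrless 0}\int_0^1 S_t g_{c,\theta}\,dt\) converges in norm to \(\chi_{\Re\gtrless 0}\,g_{c,\theta}\cdot\int_0^1 u_t(\pm1)\phi_t'(\pm1)^{-(2+\alpha)/p}\,dt\); the limiting mass distribution of \(g_{c,\theta}\) on the two half-disks, quoted from \cite[Lemma 3.2]{Norrbo-2025}, then yields the first identity. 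For \eqref{eq:gcTwoDirections} the paper uses a trick: apply the first identity to the constant family \(S_t\equiv S_s\) for each fixed \(s\), then integrate over \(s\). You instead write \(g_{c,\theta}=\lambda_c(\theta f_{c,-1}+(1-\theta)f_{c,1})\), prove an asymptotic \(\ell^p\)-additivity (``splitting'') lemma for norms of sums of functions concentrating at distinct boundary points, import the one-direction results (Lemma \ref{lem:AEofIntSt} and the computation behind Proposition \ref{prop:GoodProp}) at each of \(\pm1\), and kill the tails by Minkowski's integral inequality plus dominated convergence. This is more modular --- the two-direction lemma is reduced to the one-direction machinery plus a general splitting principle --- at the cost of having to check that those one-direction proofs localize; your justification is right, and the check does go through (for instance \((1-z)/(1-\phi_t(z))\) remains in \(H^\infty\) because \(1-\phi_t(z)\to 2\) near \(-1\), and your uniform tail estimates substitute for the single-contact-point condition \(\CLOSED{\phi_t(\D)}\cap\partial\D=\{\xi\}\) used in the paper). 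Your fixed-\(t\) treatment of \eqref{eq:gcTwoDirections} is also more direct than the paper's constant-family trick, which is elegant but less transparent.

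One discrepancy you should flag rather than gloss over: what your splitting tool actually produces is the weight \((1-\theta)^p/(\theta^p+(1-\theta)^p)\) on the direction-\(1\) term and \(\theta^p/(\theta^p+(1-\theta)^p)\) on the direction-\((-1)\) term, i.e.\ the stated formulas with \(\theta\) and \(1-\theta\) interchanged. This pairing is the correct one for the paper's definition \(\hat g_{c,\theta}=\theta(1+z)^{-c}+(1-\theta)(1-z)^{-c}\): taking \(\theta=0\) gives \(g_{c,0}=f_{c,1}\), so the limit must be \(\abs{\int_0^1 u_t(1)\phi_t'(1)^{-(2+\alpha)/p}\,dt}\), whereas the lemma as stated would return the \(-1\) quantity. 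The paper's statement (and its proof display for the mass of \(g_{c,\theta}\chi_{\Re>0}\)) carries the same swap, so this is a typo in the paper rather than an error in your argument; but since you claim your computation ``yields the first displayed limit,'' you should note explicitly that it does so only after this relabeling of \(\theta\leftrightarrow 1-\theta\).
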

\begin{proof}
Details concerning \(g_{c,\theta}\) are given in \cite[Lemma 3.2]{Norrbo-2025}. We will prove that the right-hand side of the inequality below tends to zero as \( c\to (2+\alpha)/p\):
\[
\begin{split}
&\int_{\D} \abs{  \int_0^1  g_{c,\theta}(z) \Bigg( \frac{ (S_t  g_{c,\theta})(z) }{ g_{c,\theta}(z) }   \, dt   - \chi_{\Re>0}(z) \int_0^1  \frac{ u_t(1) }{ \phi'(1)^\frac{2+\alpha}{p}}   \, dt   -   \chi_{\Re<0}(z) \int_0^1  \frac{ u_t(-1) }{ \phi'(-1)^\frac{2+\alpha}{p}}   \, dt \Bigg)  }^p \, dA_\alpha(z)   \\
&\lesssim_p \int_{\D} \abs{  g_{c,\theta}(z) \chi_{\Re>0}(z) \Bigg(  \int_0^1   \frac{ (S_t  g_{c,\theta})(z) }{ g_{c,\theta}(z) }   \, dt   -   \int_0^1   \frac{ u_t(1) }{ \phi'(1)^\frac{2+\alpha}{p}}   \, dt \Bigg)   }^p \, dA_\alpha(z) \\
&\quad + \int_{\D} \abs{  g_{c,\theta}(z) \chi_{\Re<0}(z) \Bigg(  \int_0^1   \frac{ (S_t  g_{c,\theta})(z) }{ g_{c,\theta}(z) }   \, dt    -  \int_0^1   \frac{ u_t(-1) }{ \phi'(-1)^\frac{2+\alpha}{p}}   \, dt   \Bigg)  }^p \, dA_\alpha(z).
\end{split}
\]
Since similar methods are used for the two terms, we only prove that the first tends to zero as \( c\to (2+\alpha)/p\). By Minkowski's inequality
\begin{align*}
&\norm{ g_{c,\theta} \chi_{\Re>0} \Bigg(  \int_0^1   \frac{ S_t  g_{c,\theta} }{ g_{c,\theta} }   \, dt   -  \int_0^1   \frac{ u_t(1) }{ \phi'(1)^\frac{2+\alpha}{p}}   \, dt \Bigg)   }_{A^p_{\alpha}} \\
&\leq \int_0^1  \norm{ g_{c,\theta} \chi_{\Re>0} \Bigg(  \frac{ S_t  g_{c,\theta} }{ g_{c,\theta} }    -    \frac{ u_t(1) }{ \phi'(1)^\frac{2+\alpha}{p}} \Bigg)   }_{A^p_{\alpha}} \, dt.
\end{align*}
Since
\begin{align*}
\frac{ (S_t  g_{c,\theta})(z) }{ g_{c,\theta}(z) }& = u_t (z)   \frac{ (1-\theta) (1+z)^c }{  (1-\theta) (1+z)^c  + \theta (1-z)^c      }  \frac{   (1-z)^c   }{  (1-\phi_t(z))^c  }  \\
&\quad +  u_t (z)   \frac{ \theta (1-z)^c }{ (1-\theta) (1+z)^c  + \theta (1-z)^c     }  \frac{   (1+z)^c   }{  (1+\phi_t(z))^c  }
\end{align*}
the dominated convergence theorem yields
\begin{align*}
&\lim_c \int_0^1  \norm{ g_{c,\theta} \chi_{\Re>0} \Bigg(  \frac{ S_t  g_{c,\theta} }{ g_{c,\theta} }    -     \frac{ u_t(1) }{ \phi'(1)^\frac{2+\alpha}{p}} \Bigg)   }_{A^p_{\alpha}} \, dt \\
&=  \int_0^1  \left(   \frac{  \theta^p }{    \theta^p + (1-\theta)^p }   \right)^{\frac{1}{p}}  \abs{    \frac{ u_t(1) }{ \phi'(1)^\frac{2+\alpha}{p}}  -     \frac{ u_t(1) }{ \phi'(1)^\frac{2+\alpha}{p}} }  \, dt = 0.
\end{align*}

This gives us the expression for \(\lim_c \norm{  \int_0^1 S_t  g_{c,\theta}  \, dt   }_{A^p_{\alpha}} \). To obtain \eqref{eq:gcTwoDirections}, let \(s\in]0,1[\) and consider the family obtained by putting \(S_t =S_s\) for every \(t\). Repeat this for every \(s\) and integrate over \(s\in]0,1[\) and we are done.

\end{proof}

We now present a proposition that illustrates the strong demand on \(u_t\) and \(\phi_t\) if the family \(\{S_t : t\in]0,1[\}\) have two directions.

\begin{prop}\label{prop:EssentialNormEqual}
Assume \(\{S_t : t\in]0,1[\}\) has the directions \(-1\) and \(1\) and satisfies \hyperlink{Wt}{$(W_{-1,1})$}. Let \(\theta\in [0,1]\) and let
\[
\Theta_t(1) := \Bigg(    \frac{  \theta^p }{    \theta^p + (1-\theta)^p }   \Bigg)^{\frac{1}{p}}    \frac{ u_t(1) }{ \phi_t'(1)^\frac{2}{p}}    \quad \text{ and }\quad \Theta_t(-1) := \Bigg(    \frac{  (1-\theta)^p }{    \theta^p + (1-\theta)^p }   \Bigg)^{\frac{1}{p}}    \frac{ u_t(-1) }{ \phi_t'(-1)^{\frac{2}{p}} }.  
\]
Then
\begin{equation}\label{eq:EssNormEqual}
\lim_c \norm{  \int_0^1 S_t  g_{c,\theta}  \, dt   }_{A^p_{\alpha}}   = \lim_c \int_0^1  \norm{   S_t  g_{c,\theta}     }_{A^p_{\alpha}}  \, dt
\end{equation}
if and only if there is a set \(M\subset]0,1[\) of full measure such that for \(t_0\in M\),
\[
\begin{pmatrix}  \Theta_{t} (-1) \\ \Theta_{t}(1)  \end{pmatrix}  =  \lambda_t \begin{pmatrix}  \Theta_{t_0} (-1) \\ \Theta_{t_0}(1)  \end{pmatrix} \quad \text{ or } \quad \begin{pmatrix}  \Theta_{t_0} (-1) \\ \Theta_{t_0}(1)  \end{pmatrix} = \begin{pmatrix}  0 \\ 0  \end{pmatrix}
\]
for all \(t\in M\) and some \(\lambda_t\geq 0\). 
\end{prop}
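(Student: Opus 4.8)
The plan is to recognize \eqref{eq:EssNormEqual} as the equality case of the triangle inequality for a Bochner integral valued in the strictly convex space $\C^2$ equipped with its $\ell^p$-norm, and then to exploit strict convexity. First I would rewrite both sides using Lemma~\ref{lem:EstimatesWhenTwoDirections}. Writing $\norm{(a,b)}_p:=(\abs{a}^p+\abs{b}^p)^{1/p}$ for the $\ell^p$-norm on $\C^2$ and setting $v(t):=(\Theta_t(-1),\Theta_t(1))$, the two displayed formulas in Lemma~\ref{lem:EstimatesWhenTwoDirections} together with the definition of $\Theta_t(\pm1)$ identify the left-hand side of \eqref{eq:EssNormEqual} with $\norm{\int_0^1 v(t)\,dt}_p$ and the right-hand side with $\int_0^1\norm{v(t)}_p\,dt$; the finiteness supplied by \hyperlink{Wt}{$(W_{-1,1})$} ensures $v$ is Bochner integrable. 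Thus \eqref{eq:EssNormEqual} is equivalent to $\norm{\int_0^1 v(t)\,dt}_p=\int_0^1\norm{v(t)}_p\,dt$.

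Next I would establish, for any strictly convex Banach space $X$ and Bochner integrable $v\colon]0,1[\to X$, that $\norm{\int_0^1 v\,dt}=\int_0^1\norm{v}\,dt$ holds if and only if $v(t)=\lambda(t)\,w$ for a.e.\ $t$, for some fixed $w\in X$ and measurable $\lambda\colon]0,1[\to[0,\infty)$; since $(\C^2,\norm\cdot_p)$ is strictly convex for $p>1$, this applies directly. The implication from the representation to the equality is immediate, because $\int_0^1\lambda(t)w\,dt=(\int_0^1\lambda)\,w$ and $\lambda\ge0$.

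For the converse I would assume the equality and, discarding the trivial case $\int_0^1\norm{v}\,dt=0$ (which forces $v=0$ a.e.), set $I:=\int_0^1\norm{v}\,dt>0$ and choose by Hahn--Banach a functional $f\in X^*$ with $\norm{f}=1$ and $f(\int_0^1 v\,dt)=I$. Then $I=\Re\!\int_0^1 f(v(t))\,dt\le\int_0^1\abs{f(v(t))}\,dt\le\int_0^1\norm{v(t)}\,dt=I$, so all inequalities are equalities and $f(v(t))=\norm{v(t)}$ (real and nonnegative) for a.e.\ $t$. Strict convexity then forces $f$ to expose a single unit vector: if $\norm{x}=\norm{y}=1$ and $f(x)=f(y)=1$, then $2=f(x+y)\le\norm{x+y}\le2$ gives $\norm{x+y}=2$, whence $x=y$. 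Consequently $v(t)/\norm{v(t)}$ equals this common unit vector $e$ wherever $v(t)\ne0$, so $v(t)=\norm{v(t)}\,e$ a.e., which is the desired representation with $w=e$.

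Finally I would translate $v(t)=\lambda(t)w$ into the stated form. Let $M$ be a full-measure set on which the representation holds. For $t_0\in M$, either $v(t_0)=0$, or $\lambda(t_0)>0$, in which case $w=v(t_0)/\lambda(t_0)$ and $v(t)=(\lambda(t)/\lambda(t_0))v(t_0)$ for every $t\in M$, i.e.\ the first alternative holds with $\lambda_t:=\lambda(t)/\lambda(t_0)\ge0$. Conversely, the proposition's condition yields $v=\lambda w$ a.e.\ by taking $w:=v(t_0)$ for some $t_0\in M$ with $v(t_0)\ne0$, or $w:=0$ if no such $t_0$ exists. I expect the crux to be the equality-case analysis in the strictly convex setting---specifically, passing from the integral equality to the pointwise identities $f(v(t))=\norm{v(t)}$ a.e.\ and showing the supporting functional exposes a unique direction---while the reduction through Lemma~\ref{lem:EstimatesWhenTwoDirections} and the final bookkeeping are routine.
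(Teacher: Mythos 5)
Your proof is correct, and at the decisive step it takes a genuinely different route from the paper. Both you and the paper begin identically: via Lemma \ref{lem:EstimatesWhenTwoDirections}, the two sides of \eqref{eq:EssNormEqual} are identified with \(\norm{\int_0^1 v(t)\,dt}_{\ell^p}\) and \(\int_0^1\norm{v(t)}_{\ell^p}\,dt\), where \(v(t)=(\Theta_t(-1),\Theta_t(1))\in\ell^p(\{-1,1\})\), so the question becomes the equality case of the triangle (Minkowski) inequality for a vector-valued integral. From there the paper discretizes: it applies Minkowski's inequality to the Riemann sums \(\frac1n\sum_{k=1}^n v(k/(n+1))\), lets \(n\to\infty\), and then asserts that the characterization of equality ``is obtained from Minkowski's inequality,'' i.e.\ it invokes the finite-sum equality case and leaves the passage to the integral setting implicit. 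You instead prove the integral equality case directly in any strictly convex space: a norm-one functional \(f\) attaining the norm of \(\int_0^1 v\,dt\) must satisfy \(f(v(t))=\norm{v(t)}\) a.e., and strict convexity forces \(f\) to expose a single unit vector \(e\), whence \(v(t)=\norm{v(t)}\,e\) a.e.; your translation of \(v(t)=\lambda(t)w\) into the proposition's two alternatives, including the measurability of \(\lambda_t=\norm{v(t)}/\norm{v(t_0)}\), is also correct. Your approach buys rigor and generality: the delicate point is precisely that equality surviving the limit of Riemann sums does not by itself yield a.e.\ pointwise proportionality without further argument, and your exposing-functional argument settles this cleanly and extends verbatim to Bochner integrals valued in any strictly convex Banach space. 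The paper's route buys brevity, resting on the textbook equality case for finite Minkowski sums. One cosmetic remark: your identification tacitly uses the exponent \((2+\alpha)/p\) from Lemma \ref{lem:EstimatesWhenTwoDirections}, whereas the proposition defines \(\Theta_t(\pm1)\) with \(2/p\); this discrepancy is present in the paper itself (its own proof also writes \(2/p\)) and does not affect your argument.
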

\begin{proof}
Consider \( \Theta_t\in \ell^p(\{-1,1\}\to \C) \). By applying Minkowski's inequality \(n\)-times, we have
\[
\norm{\sum_{k=1}^n \begin{pmatrix}  \Theta_{\frac{k}{n+1}} (-1) \\ \Theta_{\frac{k}{n+1}}(1)  \end{pmatrix}  }_{\ell^p} \leq \sum_{k=1}^n \norm{ \begin{pmatrix}  \Theta_{\frac{k}{n+1}} (-1) \\ \Theta_{\frac{k}{n+1}}(1)  \end{pmatrix}  }_{\ell^p}.
\]
Dividing both sides by \(n\) and letting \(n\to \infty\), we obtain

\begin{align*}
& \Bigg(   \frac{  \theta^p }{    \theta^p + (1-\theta)^p } \abs{  \int_0^1   \frac{ u_t(1) }{ \phi_t'(1)^\frac{2}{p}}     \, dt    }^p   +   \frac{  (1-\theta)^p }{    \theta^p + (1-\theta)^p } \abs{  \int_0^1   \frac{ u_t(-1) }{ \phi_t'(-1)^\frac{2}{p}}     \, dt  }^p     \Bigg)^{\frac{1}{p}}  \\
& \lim_n    \norm{   \frac{1}{n}  \sum_{k=1}^n \begin{pmatrix}  \Theta_{\frac{k}{n+1}} (-1) \\ \Theta_{\frac{k}{n+1}}(1)  \end{pmatrix}  }_{\ell^p} \leq \lim_n   \frac{1}{n}   \sum_{k=1}^n \norm{   \begin{pmatrix}  \Theta_{\frac{k}{n+1}} (-1) \\ \Theta_{\frac{k}{n+1}}(1)  \end{pmatrix}  }_{\ell^p} \\
&= \int_0^1  \Bigg(   \frac{  \theta^p }{    \theta^p + (1-\theta)^p }\abs{   \frac{ u_t(1) }{ \phi_t'(1)^\frac{2}{p}}   }^p     +   \frac{  (1-\theta)^p }{    \theta^p + (1-\theta)^p } \abs{    \frac{ u_t(-1) }{ \phi_t'(-1)^\frac{2}{p}}   }^p    \Bigg)^{\frac{1}{p}} \, dt.
\end{align*}
 
Therefore, in view of Lemma \ref{lem:EstimatesWhenTwoDirections}, the condition characterizing when the equality in \eqref{eq:EssNormEqual} is an equality (and not a strict inequality) is obtained from Minkowski's inequality.

\end{proof}

The restriction imposed by Minkowski's \(\ell^p\)-inequality, is due to the family of weighted composition operators having more than one direction. For \eqref{eq:EssNormEqual} to hold, the behavior at one direction will put a restriction on the behavior of \(u_t\) and \(\phi_t\) at every other direction, whenever the family has more than one direction. If the family is \hyperlink{admissible}{admissible} with direction \(\xi\), the condition is simplified to demand that there is a set of full measure \(M\subset ]0,1[\) such that  \(\ARG u_t(\xi) \) is constant for \(t\in M\). 

It is evident from Proposition \ref{prop:EssentialNormEqual} that even if there is a weakly null sequence \((f_n)\) such that \(\norm{S_t f_n}_{A^p_{\alpha}} = \norm{S_t}_{e,\BOP(A^p_{\alpha})} \) for every \(t\), we cannot claim that  \eqref{eq:EssNormEqual} holds, cf. Proposition \ref{prop:simpleNecCond}. 

Although the automorphisms of the disk can't be used as composition symbols to generate a family having only two directions, a similar result to Proposition \ref{prop:EssentialNormEqual} has been obtained in \cite{Norrbo-2025} concerning the family generated by \(u_t(z) := (1-zt)^{-1}\) and \(\phi_t(z) := (z-t)/(1-tz)\) and \(t\in ]-1,1[\), which fails to satisfy the \emph{if and only if}-condition given in  Proposition \ref{prop:EssentialNormEqual}, see  \cite[Theorems 3.4 and 3.8]{Norrbo-2025}.

\section{Examples}\label{sec:examples}
In this section we will use the theory developed in Section \ref{sec:EssNormIntOp} to calculate the essential norm of some Volterra operators and the generalized Hilbert matrix operator \(H_{\lambda}, \lambda>0\). Concerning the boundedness of these operators, we refer to  \cite[Theorem 1]{Aleman-1997} and \cite[Theorem 1.1]{Jevtic-2017}. Notice that on \(A^p_\alpha\), \(H_\lambda, \lambda>0\) is bounded if and only if \(H_1\) is bounded.

We also provide a simple example of an \hyperlink{admissible}{admissible} family, where \(\phi_t\) is not univalent.

\subsection{The Volterra operator}
In \cite[Theorem 1]{Aleman-1997} the authors characterize boundedness of the Volterra operator. The Volterra operators that we consider are
\[
f\mapsto \Big[z\mapsto V_g(f) (z) := \int_0^z f(t) g'(t) \, dt \Big],
\]
where \( z\mapsto  (1-z)g'(z)\) is bounded, and continuous in a neighborhood of \(1\) in \(\CLOSED{\D}\). Let \(p>1\), \(\alpha\geq 0\) and put
\[
\phi_t(z) := \frac{z t}{1-(1-t)z}   \quad  \text{and} \quad u_t(z) := \tau\frac{\phi_t(z)}{t} (1-\phi_t(z))  g'(\phi_t(z)), \quad z\in \D,
\]
where \(\tau\in \partial \D\) is chosen such that \(  \angle\lim_{z\to 1}\tau(1-z)g'(z)\geq 0\). The substitution \(t\mapsto \phi_t(z)\) yields
\[
 V_g(f) (z) = \int_0^1 u_t C_{\phi_t} \, dt.
\]
Clearly \(u_t, \phi_t\in \HOLO(\D)\) and it is easy to see that \ref{C:phiAnalyticAtCriticalPoint}-- \ref{C:NeigborhoodContainsHorocylce} hold with \(\xi=1\). To ensure \ref{C:utContinuousAtCriticalPoint} is satisfied, we only consider functions \(g\in\BLOCH\) such that \( z\mapsto  (1-z)g'(z)\) is continuous in a neighborhood of \(1\) in \(\CLOSED{\D}\). Since \(  \phi_t'(z) = t / ( 1-(1-t)z )^2     \), we can conclude that the four conditions in \((W_1)\) are satisfied. 

It remains to prove that \(\int_0^1 \smash{ \norm{u_t C_{\phi_t}}_{\BOP(A^p_{\alpha})} } \, dt < \infty \). Let \(M := \sup_{z\in \D} \abs{1-z} \abs{ g'(z) } <\infty \). Next, we use the substitution \(z\mapsto \phi_t^{-1}(z) = z/(t+(1-t)z)\) (notice that \(\abs{\phi_t^{-1}(z)} \geq \abs{z}, z\in \D\)) to obtain

\begin{equation}\label{eq:UpperBoundStart}
 \norm{u_t C_{\phi_t} f}_{A^p_{\alpha}}^p \leq M^p  t^{2-p} \int_{B\big(\frac{1-t}{2-t},\frac{1}{2-t} \big)} \abs{f(z)}^p \frac{  \abs{z}^{p} }{ \abs{t+(1-t)z}^4} \, dA_\alpha(z).
\end{equation} 

Since \(  (t+(1-t)z)^{-1} \) is analytic on \(B\big(\frac{1-t}{2-t},\frac{1}{2-t} \big)\), by the maximum modulus principle  
\begin{align*}
\inf_{ z\in B\big(\frac{1-t}{2-t},\frac{1}{2-t} \big)} \abs{t+(1-t)z}^2 & = \inf_{ z\in \partial B\big(\frac{1-t}{2-t},\frac{1}{2-t} \big)} \abs{t+(1-t)z}^2 =  \inf_{z\in\partial\D }  \abs{t+(1-t)\Big(z\frac{1}{2-t}+\frac{1-t}{2-t}\Big)}^2  \\
&=   \inf_{z\in\partial\D }  \abs{z\frac{1-t}{2-t}+\frac{1}{2-t}}^2  = \frac{t^2}{  (2-t)^2 }\geq \frac{ t^2 }{4}.
\end{align*} 
It is, therefore, clear that
\begin{equation}\label{eq:largetUpperBound}
\int_{\frac{1}{6}}^1  \norm{u_t C_{\phi_t} f}_{A^p_{\alpha}}  \leq  \norm {f}_{A^p_{\alpha}}  12^{\frac{4}{p}} M \int_{\frac{1}{6}}^1   t^{\frac{2}{p}-1}  \, dt < \infty.
\end{equation}
Furthermore, for \( t < \frac{1}{6} \) and \(\frac{3}{5}\leq\abs{z}<1 \) we have
\begin{equation}\label{eq:smalltLargezUpperbBound}
 \abs{t+(1-t)z}   \geq (1-t)\abs{z}-t \geq  \frac{1}{3}.
\end{equation}
 For \( t < \frac{1}{6} \) and \( \abs{z} < \frac{3}{5} \) we use the evaluation estimates for \(f\in A^p_\alpha\) to conclude that
\begin{equation}\label{eq:smalltSmallzUpperbBound}
 \int_{B(0,\frac{3}{5})\cap B\big(\frac{1-t}{2-t},\frac{1}{2-t} \big)}  \frac{  \abs{f(z)}^p \abs{z}^{p} }{ \abs{t+(1-t)z}^4} \, dA_\alpha(z) \lesssim_{p} \int_{B(0,\frac{3}{5})\cap B\big(\frac{1-t}{2-t},\frac{1}{2-t} \big)}  \frac{    \norm{f}_{A^p_{\alpha}}^p  \abs{z}^{p} }{ \abs{t+(1-t)z}^4} \, dA(z). 
\end{equation}
Furthermore, using the substitutions \( z\mapsto  z \frac{1}{2-t}   + \frac{ 1-t }{ 2- t }  \) and \( z\mapsto  \frac{z - (1-t)}{1 - z (1-t)}   \), we obtain
\begin{align*}
\int_{B\big(\frac{1-t}{2-t},\frac{1}{2-t} \big)}  \frac{  \abs{z}^{p} }{ \abs{t+(1-t)z}^4} & \, dA(z)  =   \bigg(\frac{1}{2-t}\bigg)^{p-2} \int_{\D}  \frac{  \abs{    z    +  1-t   }^4 }{ \abs{    z(1-t)+ 1     }^4}  \abs{    z    +  1-t   }^{p-4}   \, dA(z) \\
&= t^{p-2} \int_{\D}   \frac{   \abs{z}^p    }{   \abs{ 1-(1-t)z}^p  }   \, dA(z) \lesssim \begin{cases}
 t^{p-2},  &\text{if }  1<p<2;\\
 \ln\frac{e}{t},  &\text{if }  p=2;\\
1, &\text{if }   p>2;
\end{cases}
\end{align*} 
where the last estimate can be found in, for example, Lemma 3.10 in \cite{Zhu-2007}. Combining this with \eqref{eq:UpperBoundStart}, \eqref{eq:largetUpperBound},  \eqref{eq:smalltLargezUpperbBound} and \eqref{eq:smalltSmallzUpperbBound}, we have for \(f\in B_{A^p_{\alpha}}\)
\begin{align*}
\int_0^1  \norm{u_t C_{\phi_t} f}_{A^p_{\alpha}}  \,  dt &\lesssim_{p}  M  \Bigg( \int_{\frac{1}{6}}^1   t^{\frac{2}{p}-1}  \, dt  +    \int_0^ {\frac{1}{6}} t^{\frac{2}{p}-1}  \Big( \int_{ B\big(\frac{1-t}{2-t},\frac{1}{2-t} \big) \setminus B(0,\frac{3}{5}) }    \abs{z}^{p}  \abs{f(z)}^p \, dA(z)  \Big)^{\frac{1}{p}} \, dt   \\
&\quad + \int_0^ {\frac{1}{6}}   \Big(   \int_{B(0,\frac{3}{5})\cap B\big(\frac{1-t}{2-t},\frac{1}{2-t} \big)}  \frac{  \abs{z}^{p} }{ \abs{t+(1-t)z}^4} \, dA(z)  \Big)^{\frac{1}{p}}  \, dt \Bigg) \lesssim_p  M . 
\end{align*}

We can now conclude that \(\{u_t C_{\phi_t} :  t\in]0,1[\}\) is an \hyperlink{admissible}{admissible} family. By Theorems \ref{thm:essentialNormEquality} and \ref{thm:DominatingAE}, we obtain
\[
\norm{V_g}_{e,\BOP(A^p_{\alpha})} = \int_0^1  \frac{      u_t(1)     }{   \phi_t'(1)^{(2+\alpha)/p}  } \, dt =   \lim_{z\in\D, z\to 1} \abs{(1-z)g'(z)}   \int_0^1  t^{\frac{2+\alpha}{p}}  \, dt .
\]

\subsection{The generalized Hilbert matrix operator}

The generalized Hibert matrix operator is given by
\[
f\mapsto \Big[z\mapsto H_\lambda(f) (z) := \int_0^1\frac{ f(t) t^{\lambda-1} }{ 1-tz  } \, dt \Big]
\]
and bounded when  \(p>2+\alpha\geq 2\), see \cite[Theorem 1.1]{Jevtic-2017} and notice that on \(A^p_\alpha\), \(H_\lambda, \lambda>0\) is bounded if and only if \(H_1\) is bounded.

As with the Volterra operator, we have a representation of the mean of suitable weighted composition operators
\[
 H_\lambda(f) (z) = \int_0^1 u_t C_{\phi_t} \, dt,
\]
where \(u_t(z):=\frac{t^{\lambda -1}}{ ( 1-(1-t)z )^{\lambda} }, \phi_t(z) := \frac{t}{1-(1-t)z}, z\in D,\lambda >0\). Similarly to the case of the Volterra operator, it can be shown that \(\{ u_t C_{\phi_t} \colon t\in]0,1[\}\) is \hyperlink{admissible}{admissible}. Using Theorems \ref{thm:essentialNormEquality} and \ref{thm:DominatingAE}, we can now extend the result \cite[Example 7.4]{Lindstrom-2022} to
\[
\norm{H_{\lambda}}_{e, \BOP(A^p_\alpha)} = \int_0^1 \frac{ t^{(2+\alpha)/p}  }{ (1-t)^{(2+\alpha)/p} }    \, dt.
\]
Notice that using this approach, there is no need of finding a suitable family of compact operators, not only characterizing the \(M\)-ideal property, but also being equicontinuous w.r.t. \(\tau_0\)-topology, see \cite[condition (3.1) in Lemma 3.2]{Lindstrom-2022}. Notice that \cite[Proof of Lemma 3.2]{Lindstrom-2022} is unclear and the previous solution is rather involved, see \cite{Lindstrom-2024C}. The operator \(H_\lambda\) is also a special case of the generalized Hilbert matrix operator examined in \cite{Galanopoulos-2023}, see for example Theorem A (iii) for boundedness on weighted Bergman spaces.

\subsection{Another example}
Our final example is an integral operator of the form \(\int u_t C_{\phi_t} \, dt \) for which \(\phi_t\) is not univalent. First, we remark that given a function \(\phi\colon \D\to \C\), if  \(\phi(\D)\cup \{1\}\) contains a disk \(\subset \D\cup \{1\}\) touching \(1\), then so does \(\phi^2(\D)\cup \{1\}\). This can be seen as follows: Let \(f_{\theta}(z) = (1-\theta) + \theta z \) and notice that  \(f_{\theta_1}(\D) \subset f_{\theta_2}(\D)\) when \(0<\theta_1\leq \theta_2<1\). By assumption there exists a small \(0<\theta\leq \frac{1}{6}\) such that \(f_{\theta}(\D) \subset \phi(\D)\cup \{1\}\). It holds that 
\begin{equation}\label{eq:provingSquaredDiskContainsDisk}
\begin{split}
\inf &\{\abs{z} :  z\in \C\setminus ( f_{\theta}^2(\D) - (1-\theta) )   \} = \min_t \abs{ ( (1-\theta) + \theta e^{it})^2 - (1-\theta) } \\
&=\theta \min_t \sqrt{  ((1-\theta)(2\cos t - 1 ) + \theta \cos(2t))^2 + ( 2(1-\theta)\sin t  + \theta \sin(2t))^2  }.
\end{split}
\end{equation}
For \(t\in]0,\pi/4[\), we have \( 2\cos t - 1 \geq 2\cos^2 t - 1 =\cos (2t)  \) and \( 2\sin t  \geq 2\sin t \cos t  =\sin (2t)  \). For \(t\in [\pi/4, 3\pi/4]\), we have \(2(1-\theta)\sin t  -\theta \geq 1\), and for \(t\in [3\pi/4,\pi]\), we have \((1-\theta)\abs{2\cos t  - 1} -\theta \geq 1\). It follows from \eqref{eq:provingSquaredDiskContainsDisk} that
\[
\inf \{\abs{z} :  z\in \C\setminus ( f_{\theta}^2(\D) - (1-\theta) )   \} \geq \theta,
\]
which implies \(f_{\theta}(\D)-(1-\theta) \subset f_{\theta}^2(\D)-(1-\theta) \).

Let
\[
\phi_t(z) := \Big( \frac{  1+(1+t)z  }{  2+t  }  \Big)^2, \quad \phi_t'(z) =2(1+t) \frac{  1+(1+t)z  }{  (2+t)^2 } 
\]
and for a continuous, positive function \(U\in L^1(]0,1[)\), define
\[
u_t (z) := U(t) \phi_t'(z)^{\frac{2}{p}}. 
\]
It is easy to see that the only nontrivial conditions for the family \(\{ u_tC_{\phi_t}:t\in]0,1[ \}\) to be \hyperlink{admissible}{admissible} are \ref{C:NeigborhoodContainsHorocylce} and \(\int \norm{u_tC_{\phi_t}}\, dt <\infty\). The remark above proves that  \ref{C:NeigborhoodContainsHorocylce}  is satisfied. Since \(\phi_t\) is univalent on \(\D_{\Im>0}:=\{ z\in \D : \Im z > 0  \} \) and \(\D_{\Im<0}\), and
\[
\frac{  1-\abs{z}^2  }{  1-\abs{ \phi_t(z)  }^2   }  \lesssim \frac{  1-\abs{z}  }{  1-\abs{ \frac{   1 + (1 + t)z    }{ 2+t }  }   } \lesssim 1,
\]
we have
\begin{align*}
 U(t)^{-p}\norm{u_tC_{\phi_t}f}_{A^p_{\alpha}}^p &= \int_{\D}  \abs{f(\phi_t(z))}^p \abs{\phi_t'(z)}^2 \, dA_\alpha(z) \\
&= \int_{\D_{\Im>0}}  \abs{f(\phi_t(z))}^p \abs{\phi_t'(z)}^2 \, dA_\alpha(z) + \int_{\D_{\Im<0}}  \abs{f(\phi_t(z))}^p \abs{\phi_t'(z)}^2 \, dA_\alpha(z) \\
& \lesssim \int_{  \phi_t(\D_{\Im>0}) }  \abs{f(z)}^p \, dA_\alpha(z) + \int_{\phi_t(\D_{\Im<0})}  \abs{f(z)}^p  \, dA_\alpha(z) \leq  2\norm{f}_{A^p_{\alpha}}^p.
\end{align*}

Since \( \{ u_t C_{\phi_t} : t\in]0,1[\}\) is \hyperlink{admissible}{admissible}, Theorems \ref{thm:essentialNormEquality} and \ref{thm:DominatingAE} yield
\[
\norm{ \int_0^1 u_t C_{\phi_t} \, dt }_{e,\BOP(A^p_{\alpha})} = \int_0^1  U(t) \Big( \frac{2+t}{2(1+t)} \Big)^{\frac{\alpha}{p}} \, dt.
\]

\section{Some further results concerning the essential norm}\label{sec:FurtherResults}

Following the proof of N. J. Kalton \cite[Theorem 2.4 \((1) \Rightarrow (2)\)]{Kalton-1993}, it is easy to see that if there is a \(\lambda>0\) such that \(\norm{L_n}\leq \lambda\) for every \(n\), then every \(L\in \CONV\{L_n, L_{n+1},\ldots\}\), also satisfies \(\norm{L}\leq \lambda\). Moreover, if we replace \(\max\{\norm{S},\norm{T}\}\) by \(\max\{\norm{S},\norm{T-K}\}\) for any compact operator \(K\), we can obtain the same contradiction at the end, since \(\psi(T) = \psi(T-K)\). Furthermore, concerning the extension of functionals \(\phi\in \COP(X)^*\) to \(\BOP(X)^*\), the assumption that \(X\) has a shrinking compact approximating sequence \((K_n)\) yields that for each  \(S,V\in \BOP(X)\),
\[
\lim_n \PAIR{V^*K_n^*S^* x^*, x^{**}} =\lim_n \PAIR{K_n^*S^* x^*, V^{**}x^{**}} = \PAIR{S^* x^*, V^{**}x^{**}} =  \PAIR{V^* S^* x^*, x^{**}}.
\]
As in \cite{Kalton-1993}, it follows that \(\phi(SV) := \lim_n \phi(SK_n V), \, S,V\in\BOP(X) \) gives a unique norm preserving extension. Since \((1),(2)\) and \((3)\) in \cite[Theorem 2.4]{Kalton-1993} are all equivalent, we have obtained:

\begin{prop}
If \(X\) is a separable Banach space, then \(\COP(X)\) is an M-ideal in \(\BOP(X)\) if and only if there exists a shrinking compact approximating sequence \((K_n)\subset B_{\COP(X)}\), such that for any \(S\in\COP,T,V\in \BOP(X)\) it holds that
\[
\lim_n\norm{S + T(I-K_n)V} \leq \max\{ \norm{S}, \norm{TV}_e \}.
\]
\end{prop}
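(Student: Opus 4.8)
The plan is to deduce the proposition from Kalton's Theorem 2.4 in \cite{Kalton-1993}, which establishes the equivalence of the M-ideal property of $\COP(X)$ in $\BOP(X)$ with the existence of a shrinking compact approximating sequence $(K_n)\subset B_{\COP(X)}$ satisfying the convexity estimate $\lim_n\norm{S+T(I-K_n)V}\leq\max\{\norm{S},\norm{TV}\}$ for all $S\in\COP(X)$ and $T,V\in\BOP(X)$. The sole new ingredient is the sharpening of the operator norm $\norm{TV}$ to the essential norm $\norm{TV}_e$; everything else is inherited from Kalton's equivalences and the three observations recorded just above the statement.

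For sufficiency, suppose a shrinking compact approximating sequence $(K_n)\subset B_{\COP(X)}$ obeys the displayed inequality. Because $\norm{TV}_e\leq\norm{TV}$, we have $\max\{\norm{S},\norm{TV}_e\}\leq\max\{\norm{S},\norm{TV}\}$, so the same sequence satisfies Kalton's (weaker) estimate; his implication back to the M-ideal property then shows $\COP(X)$ is an M-ideal in $\BOP(X)$.

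For necessity, assume $\COP(X)$ is an M-ideal. Kalton's theorem supplies the requisite shrinking compact approximating sequence $(K_n)\subset B_{\COP(X)}$, and I would rerun his contradiction proof of the convexity estimate, incorporating the sharpening flagged in the preceding remarks. Setting $L_n:=S+T(I-K_n)V$, the convexity observation gives $\norm{L}\leq\lambda$ for every $L$ in the tail $\CONV\{L_n,L_{n+1},\ldots\}$ whenever $\norm{L_n}\leq\lambda$ for all $n$; a failure of the estimate would then be separated by a functional $\psi$ which the M-ideal (equivalently, the $\ell^1$-decomposition of $\BOP(X)^*$) structure forces into $\COP(X)^\perp$. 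The decisive step is that such a $\psi$ satisfies $\psi(TV)=\psi(TV-K)$ for every compact $K$ (the identity $\psi(T)=\psi(T-K)$ of the remarks), so passing to the infimum over $K$ replaces $\norm{TV}$ by $\norm{TV}_e$ and the very same contradiction appears at the end. The norm-preserving extension of $\phi\in\COP(X)^*$ to $\BOP(X)^*$ that underlies this is furnished by $\phi(SV):=\lim_n\phi(SK_nV)$, well-defined by the weak-$*$ identity $\lim_n\PAIR{V^*K_n^*S^*x^*,x^{**}}=\PAIR{V^*S^*x^*,x^{**}}$ displayed above, which holds precisely because $(K_n)$ is shrinking.

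The main obstacle is confirming that Kalton's separation argument survives the contraction of the target ball from radius $\max\{\norm{S},\norm{TV}\}$ to $\max\{\norm{S},\norm{TV}_e\}$. This is exactly what the identity $\psi(TV)=\psi(TV-K)$ secures: the component of $\psi$ in $\COP(X)^\perp$ detects $TV$ only modulo compacts and is therefore controlled by $\norm{TV}_e$, while the component in $\COP(X)^*$ is controlled by $\norm{S}$; the $\ell^1$-additivity of the M-ideal decomposition converts this into the required maximum bound, reproducing Kalton's contradiction with $\norm{TV}_e$ in place of $\norm{TV}$.
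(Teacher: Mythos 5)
Your proposal is correct and takes essentially the same route as the paper: sufficiency is the trivial consequence of \(\norm{TV}_e\leq\norm{TV}\) combined with Kalton's equivalences, and necessity reruns Kalton's proof of \((1)\Rightarrow(2)\) with precisely the paper's three modifications — the convexity bound on tails \(\CONV\{L_n,L_{n+1},\ldots\}\), the identity \(\psi(TV)=\psi(TV-K)\) for compact \(K\) (which, after taking the infimum over \(K\), upgrades \(\norm{TV}\) to \(\norm{TV}_e\)), and the norm-preserving extension \(\phi(SV):=\lim_n\phi(SK_nV)\) justified by the shrinking property of \((K_n)\). No substantive difference from the paper's argument.
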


\section{Further research}

Expanding  \eqref{eq:equalityOfEssentialNorm} to hold for families not necessarily consisting of weighted composition operators, \(\{S_t : t\in]0,1[ \}\), and possibly consider other spaces such as higher dimensional weighted Bergman spaces and \(A^p_\alpha\), when \(p=1\) or \(\alpha<0\). It would be interesting to find such families for which the essential norm cannot be calculated explicitely, but the geometric nature of the problem would ensure that  \eqref{eq:equalityOfEssentialNorm}  holds. Notice that a Banach space satisfying \((M_p)\) does not contain enough structure to be able to characterize equalities such as \ref{eq:equalityOfEssentialNorm}. The characterization obtained in this work heavily relies on the function space nature, in order to distinguish approximate evaluation maps from the general weakly null sequences. Moreover, the direction of the family is a concept that could be generalized, but also might demand some structure of the space.

\section{Acknowledgments}

The author was supported by the Magnus Ehrnrooth Foundation.

\bibliographystyle{abbrv}
\bibliography{bibliography}

\end{document}